\theoremstyle{definition}
\newtheorem{definition}{Definition}[section]
\newtheorem{remark}[definition]{Remark}
\theoremstyle{plain}
\newtheorem{lemma}[definition]{Lemma}
\newtheorem{theorem}[definition]{Theorem}
\begin{document} 

\title[NONLINEAR STEFAN PROBLEM FOR ONE-PHASE GENERALIZED HEAT EQUATION]{NONLINEAR STEFAN PROBLEM FOR ONE-PHASE GENERALIZED HEAT EQUATION WITH HEAT FLUX AND CONVECTIVE BOUNDARY CONDITION}

\author[T.A. Nauryz]{T.A. Nauryz}

\address{Kazakh British Technical University, Almaty, Kazakhstan}

\address{Institute of Mathematics and Mathematical Modeling, Almaty, Kazakhstan}

\email{targyn.nauryz@gmail.com}

\subjclass[2010] {80A22, 80A05.}

\keywords{Stefan problem, nonlinear thermal coefficients, similarity solution, incomplete gamma function, fixed point theorem}

\maketitle

\begin{abstract}
In this article we consider a mathematical model of an initial stage of closure electrical contact that involves a metallic vaporization after instantaneous exploding of contact due to  arc ignition with power $P_0$ on fixed face $z=0$ and heat transfer in material with a variable cross section, when the radial component of the temperature gradient can be neglected in comparison with the axial component with heat flux and convective boundary conditions prescribed at the known free boundary $z=\alpha(t)$. The temperature field in the liquid region of such kind of material can be modelled by Stefan problem for the generalized heat equation. The method of solution is based on similarity variable, which enables us to reduce generalized heat equation to nonlinear ordinary differential equation. Moreover, we have to determine temperature solution for the liquid phase and location of melting interface. Existence and uniqueness of the solution is proved by using the fixed point Banach theorem. The solution for two cases of thermal coefficients, in particular, constant and linear thermal conductivity are represented, existence and uniqueness for each type of solution is proved.
\medskip

\end{abstract}

\section{Introduction}
The heat transfer Stefan problems such as melting and freezing, diffusion process constitute a vast area with a wide engineering and industrial applications. Stefan problems describe the heat processes in phase transitions, where these phase transitions are characterized by thermal diffusion and they have been studied widely in \cite{1}-\cite{9}. The extensive bibliography related to this study is represented in \cite{10}. 

The classical direct Stefan problems with free boundaries is the phase-change problem where temperature field in liquid (in melting problem) or solid regions (in solidification problem) and interface melting temperature at free boundary $x=\beta(t)$ have to be determined but if dynamics of heat flux has to determined in this case inverse Stefan problem is considered. Such kind of problems for materials with spherical, cylindrical and cross-section domain arising in electrical contact phenomena are successfully discussed in \cite{11}-\cite{18}. Mathematical modeling of non-classical Stefan problem should take into account temperature dependence of the thermal conductivity because it is very essential to get correct description of the boiling and melting dynamics. The nonlinear Stefan problem with Diriclet, Neumann and Robin conditions on the fixed and moving face are considered and successfully solved in \cite{19}-\cite{24}. Bollati, Briozzo and Natale successfully discussed about inverse non-classical Stefan problem in which unknown thermal coefficients have to be determined \cite{25} and Briozzo, Natale with Tarzia considered inverse non-classical Stefan problem  for Storm’s-type materials through a phase-change process \cite{26}. Huntul and Lesnic also discussed an inverse problem of determining the time-dependent
thermal conductivity and the transient temperature satisfying the heat equation with boundary data \cite{27}.

\begin{figure}[t]\label{Fig1}
\includegraphics[width=7.5 cm]{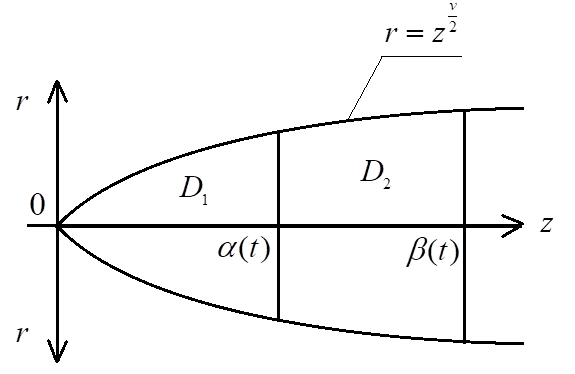}
\caption{Mathematical model of material with variable cross-section: $D_1$-metallic vapour region, $D_2$-melting region}
\end{figure}

Mathematical model of the heat transfer process in the material with cross-section variable region can be represented by the generalized heat equations. This kind of model is very useful to describe dynamics of temperature in metal bridge in electrical contact phenomena to prevent contact explosion. The mathematical model of initial stage of closure electrical contacts involves domains metallic vapour and liquid regions, see Figure \ref{Fig1}.Modeling of the temperature field in domain $D_1$ is a difficult problem, thus we suggest that heat is distributed in parabolic form and the mathematical model for a metallic vapour zone can be represented
\begin{equation}\label{1}
    \theta_1(z,t)=Az^2+Bz+C,\;\;\;0<z<\alpha(t),\;\;t>0,
\end{equation}
and temperature in this region decreases from the temperature $\theta_{im}$ which is required for ionization of the metallic vapour
\begin{equation}\label{2}
    \theta_1(0,t)=\theta_{im},\;\;\;t>0
\end{equation}
\begin{equation}\label{4}
    \theta_1(\alpha(t),t)=\theta_b,\;\;\;t>0,
\end{equation}
and the balance of heat flux on $z=\alpha(t)$ is
\begin{equation}\label{5}
    -\lambda\dfrac{\partial\theta_1}{\partial z}\bigg|_{z=\alpha(t)}=\dfrac{P_0}{2\sqrt{\pi t}}-l_b\gamma_b\dfrac{d\alpha}{dt},\;\;\;t>0,
\end{equation}
where $\theta_1(z,t)$ is a temperature in metallic vapour zone, $\theta_b$ is a boiling temperature. $P_0$ is a given positive constant, $l_b$ is a latent heat of boiling and $\gamma_b>0$ is a density of material at boiling. The location of the boiling interface $\alpha(t)$ can be represented
\begin{equation}\label{6}
    \alpha(t)=2\alpha_0\sqrt{t}.
\end{equation}

From conditions \eqref{2},\eqref{4} we can determine 
\begin{equation}\label{7}
    C=\theta_{im},\;\;\;B=0,\;\;\;A=\dfrac{1}{4\alpha_0^2t}(\theta_b-\theta_{im}).
\end{equation}

Then by using \eqref{7} the temperature field for metallic vapour zone \eqref{1} can be rewritten 
\begin{equation}\label{8}
    \theta_1(z,t)=\dfrac{z^2}{4\alpha_0^2t}(\theta_b-\theta_{im})+\theta_{im}.
\end{equation}
With the help of \eqref{8} we can easily see that the solution of the equation \eqref{5} is \eqref{6} where $\alpha_0$ can be determined from the equation 
\begin{equation}\label{9}
    \alpha_0^2+D\alpha_0+E=0
\end{equation}
where 
$$D=\dfrac{P_0}{2l_b\gamma_b\sqrt{\pi}},\;\;\;E=\dfrac{\lambda(\theta_b-\theta_{im})}{l_b\gamma_b}.$$

A mathematical model of temperature field of the domain $D_1$ can be represented
\begin{equation}\label{10}
    c(\theta_2)\rho(\theta_2)\dfrac{\partial\theta_2}{\partial t}=\dfrac{1}{z^{\nu}}\dfrac{\partial}{\partial z}\bigg[\lambda(\theta_2)z^{\nu}\dfrac{\partial\theta_2}{\partial z}\bigg],\;\;\alpha(t)<z<\beta(t),\;\;0<\nu<1,\;\;t>0,
\end{equation}
\begin{equation}\label{11}
    -\lambda(\theta_2(\alpha(t),t))\dfrac{\partial\theta_2}{\partial z}\bigg|_{z=\alpha(t)}=\dfrac{P_0e^{-\alpha_0^2}}{2\sqrt{\pi t}},\;\;\;t>0,
\end{equation}
\begin{equation}\label{12}
    \theta_2(\beta(t),t)=\theta_m,\;\;\;t>0,
\end{equation}
\begin{equation}\label{13}
    -\lambda(\theta_2(\beta(t),t))\dfrac{\partial\theta_2}{\partial z}\bigg|_{z=\beta(t)}=l_m\gamma_m\dfrac{d\beta}{dt},\;\;\; t>0,
\end{equation}
\begin{equation}\label{14}
    \beta(0)=0
\end{equation}
where $c(\theta_2)$, $\rho(\theta_2)$ and $\lambda(\theta_2)$ are  specific heat, material's density and thermal conductivity depended on temperature, $\theta_2(z,t)$ - temperature in liquid phase, $P_0$ is a given positive constant, $\theta_m$ - melting temperature, $l_m$ - latent heat of melting, $\gamma_m$ - density of material at melting, $\alpha(t)$ is a known free boundary that can be determined from \eqref{5} and \eqref{9}, $\beta(t)$ - location of the melting interface which has to be found.

We will consider one more problem replacing the heat flux condition \eqref{11} with convective boundary condition on the known free boundary $z=\alpha(t)$ such that
\begin{equation}\label{11a}
    \lambda(\theta_2(\alpha(t),t))\dfrac{\partial\theta_2}{\partial z}\bigg|_{z=\alpha(t)}=\dfrac{q}{2\sqrt{\pi t}}(\theta_2(\alpha(t),t)-\theta^*),\;\;\;t>0
\end{equation}
where $q=P_0e^{-\alpha_0^2}$ is the coefficient characterizes the heat transfer at the free boundary $z=\alpha(t)$ determined from \eqref{9}, $\theta^*$ is the reference bulk temperature which arising near to free boundary $z=\alpha(t)$ with $\theta^*>\theta_2(\alpha(t),t)$.    

The purpose of the paper is a providing similarity solution of the one-phase Stefan problem for generalized heat equation if heat flux enters to liquid region from known free boundary $z=\alpha(t)$ where boiling process starts and determination of the location of the melting interface on the boundary $z=\beta(t)$. In Section 2, similarity solution of two problems are introduced where condition \eqref{11a} replaced with \eqref{11} and this special method enables us to reduce the problem \eqref{10}-\eqref{14} boundary value problem with ordinary nonlinear differential equation. In Section 3, the existence and uniqueness of the similarity solutions of the two problems imposed \eqref{10}-\eqref{14} with two free boundaries is provided  by using fixed point Banach theorem. In the last section, we provide the solutions for particular cases of thermal coefficients and their existence, uniqueness are discussed.

\section{Similarity solution of the problem}
\subsection{Heat flux condition}

Using dimensionless transformation
\begin{equation}\label{15}
    T(z,t)=\dfrac{\theta(z,t)-\theta_m}{\theta_m}
\end{equation}
Then problem \eqref{10},\eqref{11},\eqref{12},\eqref{13} and \eqref{14} can be rewritten as
\begin{equation}\label{16}
    \Bar{N}(T_2)\dfrac{\partial T_2}{\partial t}=\dfrac{a}{z^{\nu}}\dfrac{\partial}{\partial z}\bigg[\Bar{L}(T_2)z^{\nu}\dfrac{\partial T_2}{\partial z}\bigg],\;\;\alpha(t)<z<\beta(t),\;\;0<\nu<1,\;\;t>0,
\end{equation}
\begin{equation}\label{17}
    \Bar{L}(T_2(\alpha(t),t))\dfrac{\partial T_2}{\partial z}\bigg|_{z=\alpha(t)}=-\dfrac{P_0e^{-\alpha_0^2}}{2\lambda_0\theta_m\sqrt{\pi t}},\;\;\;t>0,
\end{equation}
\begin{equation}\label{18}
    T_2(\beta(t),t)=0,\;\;\;t>0,
\end{equation}
\begin{equation}\label{19}
    \Bar{L}(T_2(\beta(t),t))\dfrac{\partial T_2}{\partial z}\bigg|_{z=\beta(t)}=-\dfrac{l_m\gamma_m}{\lambda_0\theta_m}\dfrac{d\beta}{dt},\;\;t>0,
\end{equation}
\begin{equation}\label{20}
    \beta(0)=0
\end{equation}
where 
\begin{equation}\label{NL}
    \Bar{N}(T_2)=\dfrac{c(\theta_m T_2+\theta_m)\rho(\theta_m T_2+\theta_m)}{c_0\rho_0},\;\;\;\Bar{L}(T_2)=\dfrac{\lambda(\theta_m T_2+\theta_m)}{\lambda_0}
\end{equation}
and $c_0,\;\rho_0,\;\lambda_0$, $a=\lambda_0/(c_0\rho_0)$ are heat capacity, density, thermal conductivity and thermal diffusivity of the material.

To solve problem \eqref{16},\eqref{17},\eqref{18},\eqref{19} and \eqref{20} we use similarity type substitution
\begin{equation}\label{21}
    T_2(z,t)=u_2(\eta),\;\;\;\eta=\dfrac{z}{2\sqrt{t}},
\end{equation}
and from \eqref{16},\eqref{17}, \eqref{18} and \eqref{21} the free boundaries can be represented as
\begin{equation}\label{22}
    \alpha(t)=2\alpha_0\sqrt{t},\;\;\;\beta(t)=2\xi\sqrt{t},
\end{equation}
where $\alpha_0$ is known constant which determined from \eqref{9} and $\xi$ has to be determined.
Then we obtain the following problem
\begin{equation}\label{23}
    [L^*(u_2)\eta^{\nu}u_2']'+\dfrac{2}{a}\eta^{\nu+1}N^*(u_2)u_2'=0,\;\;\alpha_0<\eta<\xi,\;\;\;0<\nu<1,
\end{equation}
\begin{equation}\label{24}
    L^*(u_2(\alpha_0))u_2'(\alpha_0)=-q^*,
\end{equation}
\begin{equation}\label{25}
    u_2(\xi)=0,
\end{equation}
\begin{equation}\label{26}
    u_2'(\xi)=-M\xi
\end{equation}
where $q^*=\dfrac{P_0e^{-\alpha_0^2}}{\alpha_0\theta_m\sqrt{\pi}}$, $M=\dfrac{2l_m\gamma_m}{\lambda_0\theta_m\lambda(\theta_m)}$ and
\begin{equation}\label{NL2}
L^*(u_2)=\dfrac{\lambda(\theta_m u_2+\theta_m)}{\lambda_0},\;\;\; N^*(u_2)=\dfrac{c(\theta_m u_2+\theta_m)\rho(\theta_m u_2+\theta_m)}{c_0\rho_0}.    
\end{equation}

We can deduce that $(u_2, \xi)$ is a solution of the problem \eqref{23},\eqref{24},\eqref{25} and \eqref{26} if and only if it satisfies the integral equation
\begin{equation}\label{27}
    u_2(\eta)=q^*[\Phi(\xi,u_2(\xi))-\Phi(\eta,u_2(\eta))]
\end{equation}
where
\begin{equation}\label{28}
    \Phi(\eta,u_2(\eta))=\alpha_0^{\nu}\int\limits_{\alpha_0}^{\eta}\dfrac{E(s,u_2(s))}{v^{\nu}L^*(u_2(v))}dv
\end{equation}
\begin{equation}\label{29}
    E(\eta,u_2(\eta))=\exp\Bigg(-\dfrac{2}{a}\int\limits_{\alpha_0}^{\eta}s\dfrac{N^*(u_2(s))}{L^*(u_2(s))}ds\Bigg)
\end{equation}
and condition 
\begin{equation}\label{30}
    \dfrac{q^*\alpha_0^{\nu}E(\xi,u_2(\xi))}{M\lambda(\theta_m)}=\xi^{\nu+1}
\end{equation}
From expression \eqref{30} we can determine $\xi$ for the free boundary $\beta(t)$. 

The solution of the free boundary \eqref{10}-\eqref{14} is given by \eqref{15} and
$$\theta_2(z,t)=\theta_m+\theta_m u_2(\eta)$$
where $\eta=z/(2\sqrt{t})$ and function $u_2(\eta)$ must satisfy the integral equation \eqref{27} and condition \eqref{30}.

\subsection{Convective boundary condition} 
If we use the dimensionless substitution
\begin{equation}\label{e1}
    T(z,t)=\dfrac{\theta(z,t)-\theta^*}{\theta_m-\theta^*}>0,
\end{equation}
then problem \eqref{10}-\eqref{14} with replaced condition with \eqref{11a} instead of heat flux condition becomes
\begin{equation}\label{e2}
    \Bar{N}(T_2)\dfrac{\partial T_2}{\partial t}=\dfrac{a}{z^{\nu}}\dfrac{\partial}{\partial z}\bigg[\Bar{L}(T_2)z^{\nu}\dfrac{\partial T_2}{\partial z}\bigg],\;\;\alpha(t)<z<\beta(t),\;\;0<\nu<1,\;\;t>0,
\end{equation}
\begin{equation}\label{e3}
    \Bar{L}(T_2(\alpha(t),t))\dfrac{\partial T_2}{\partial z}\bigg|_{z=\alpha(t)}=\dfrac{q}{2\lambda_0\sqrt{\pi t}}T_2(\alpha(t),t),\;\;\;t>0,
\end{equation}
\begin{equation}\label{e4}
    T_2(\beta(t),t)=1,\;\;\;t>0,
\end{equation}
\begin{equation}\label{e5}
    \Bar{L}(T_2(\beta(t),t))\dfrac{\partial T_2}{\partial z}\bigg|_{z=\beta(t)}=\dfrac{\beta'(t)}{a\text{Ste}},\;\;t>0,
\end{equation}
\begin{equation}\label{e6}
    \beta(0)=0
\end{equation}
where $q=P_0e^{-\alpha_0^2}$, $\text{Ste}=\frac{(\theta_m-\theta^*)c_0}{l_m}>0$ and $\Bar{N},\;\bar{L}$ are defined from \eqref{NL}.

Then using similarity transformation \eqref{22} problem \eqref{e2},\eqref{e3},\eqref{e4},\eqref{e5},\eqref{e6} can be rewritten as
\begin{equation}\label{e6}
    [L^*(u_2)\eta^{\nu}u_2']'+\dfrac{2}{a}\eta^{\nu+1}N^*(u_2)u_2'=0,\;\;\alpha_0<\eta<\xi,\;\;\;0<\nu<1,
\end{equation}
\begin{equation}\label{e7}
    L^*(u_2(\alpha_0))u_2'(\alpha_0)=p^*u_2(\alpha(t)),
\end{equation}
\begin{equation}\label{e8}
    u_2(\xi)=1,
\end{equation}
\begin{equation}\label{e9}
    L^*(u_2(\xi))u_2'(\xi)=\dfrac{2\xi}{a\text{Ste}}
\end{equation}
where $p^*=q/(\lambda_0\sqrt{\pi})$ and $L^*,\;N^*$ are determined from \eqref{NL2}. 

We conclude that the solution of the problem \eqref{e6},\eqref{e7},\eqref{e8} and \eqref{e9} is
\begin{equation}\label{e10}
    u_2(\eta)=\dfrac{1+\alpha_0^{\nu}p^*\Phi(\eta, u_2(\eta))}{1+\alpha_0^{\nu}p^*\Phi(\xi, u_2(\xi))}
\end{equation}
with condition
\begin{equation}\label{e11}
    \dfrac{a\alpha_0^{\nu}E(\xi,u_2(\xi))\text{Ste}}{2\big[1+\alpha_0^{\nu}p^*\Phi(\xi,u_2(\xi))\big]}=\xi^{\nu+1}
\end{equation}
where $\Phi$ and $E$ are defined by \eqref{28} and \eqref{29}.

With help of \eqref{e1} and \eqref{e10} we summarize that solution of the problem \eqref{10},\eqref{11a},\eqref{12},\eqref{13}\eqref{14} can be represented in the form of
\begin{equation}\label{e12}
    \theta_2(\eta)=\theta^*+(\theta_m-\theta^*)u_2(\eta)
\end{equation}
where $\eta=z/(2\sqrt{t})$ and $u_2(\eta)$ satisfies the integral equation \eqref{e10} and condition \eqref{e11}.

\section{Existence and uniqueness of the similarity solution}
\subsection{Problem with heat flux condition}
To prove existence of the solution form \eqref{27} we assume that $\xi>0$ is a given constant. We consider the continuous real valued functions space $C^0[\alpha_0, \xi]$ which endowed with supremum norm 
$$||u||=\max_{\eta\in[\alpha_0,\xi]}|u(\eta)|$$
and using a fixed point Banach theorem $(C^0[\alpha_0,\xi],||\cdot||)$, We define operator $W: C^0[\alpha_0,\xi]\to C^0[\alpha_0,\xi]$ which is
\begin{equation}\label{31}
    W(u_2)(\eta):=u_2(\eta),\;\;\forall \eta\in[\alpha_0,\xi],
\end{equation}
where $u_2$ is defined by \eqref{27}. Then by using the fixed point Banach theorem we have to prove that operator \eqref{31} is contraction operator of mapping and it implies that there must exists unique solution $u\in C^0[\alpha_0,\xi]$ to integral solution \eqref{27}.

At first, we suppose that $L^*$ and $N^*$ are bounded and satisfy Lipschitz inequalities such that
\begin{enumerate}
\item[a)] There exists $L_m=\dfrac{\lambda_m}{\lambda_0}>0$ and $L_M=\dfrac{\lambda_M}{\lambda_0}>0$ such that
\begin{equation}\label{32}
L_m\leq L^*(u)\leq L_M,\;\;\;\forall u\in C^0(\mathbb{R}_0^+)\cup L^{\infty}(\mathbb{R}_0^+).
\end{equation}
and $\bar{L}=\dfrac{\bar{\lambda}(\theta_m+1)}{\lambda_0}>0$ such that
\begin{equation}\label{33}
||L^*(u_1)-L^*(u_2)||\leq \bar{L}||u_1-u_2||,\;\;\;\forall u_1,u_2\in C^0(\mathbb{R}_0^+)\cup L^{\infty}(\mathbb{R}_0^+).
\end{equation}
\item[b)] There exists $N_m=\dfrac{\sigma_m}{c_0,\gamma_0}>0$ and $N_M=\dfrac{\sigma_M}{c_0\gamma_0}>0$ such that
\begin{equation}\label{34}
N_m\leq N^*(u)\leq N_M,\;\;\;\forall u\in C^0(\mathbb{R}_0^+)\cup L^{\infty}(\mathbb{R}_0^+).
\end{equation}
and $\bar{N}=\dfrac{\bar{\sigma}(\theta_m+1)}{c_0\gamma_0}>0$ such that
\begin{equation}\label{35}
||N^*(u_1)-N^*(u_2)||\leq \bar{N}||u_1-u_2||,\;\;\;\forall u_1,u_2\in C^0(\mathbb{R}_0^+)\cup L^{\infty}(\mathbb{R}_0^+).
\end{equation}
\end{enumerate}
Now we have to obtain some preliminary results to prove the existence and uniqueness of the solution to the equation \eqref{27}.
\begin{lemma}\label{lem1}
For all $\eta\in[\alpha_0,\;\xi]$ the following inequality holds
\begin{equation}\label{36}
    \exp\bigg(-\dfrac{N_M}{aL_m}(\eta^2-\alpha_0^2)\bigg)\leq E(\eta, u)\leq \exp\bigg(-\dfrac{N_m}{aL_M}(\eta^2-\alpha_0^2)\bigg).
\end{equation}
\end{lemma}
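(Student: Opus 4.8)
The plan is to exploit directly the definition of $E(\eta,u)$ in \eqref{29} together with the two-sided bounds \eqref{32} and \eqref{34} on $L^*$ and $N^*$. The entire argument is an elementary monotonicity estimate on the exponent, so there is no deep obstacle; the only point requiring care is tracking the direction of the inequalities when I multiply through by the negative factor $-2/a$.

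First I would observe that since $\alpha_0>0$ and $\eta\in[\alpha_0,\xi]$, the integration variable satisfies $s\ge\alpha_0>0$ throughout, so multiplying by $s$ preserves inequalities. Combining the lower bound $N_m\le N^*(u(s))$ with the upper bound $L^*(u(s))\le L_M$, and symmetrically the upper bound $N^*(u(s))\le N_M$ with the lower bound $L_m\le L^*(u(s))$, I obtain the pointwise two-sided estimate
\begin{equation*}
\frac{N_m}{L_M}\le \frac{N^*(u(s))}{L^*(u(s))}\le \frac{N_M}{L_m},\qquad s\in[\alpha_0,\eta].
\end{equation*}

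Next I would multiply this chain by $s>0$ and integrate over $[\alpha_0,\eta]$, using the elementary identity $\int_{\alpha_0}^{\eta}s\,ds=\tfrac12(\eta^2-\alpha_0^2)$, to get
\begin{equation*}
\frac{N_m}{2L_M}\,(\eta^2-\alpha_0^2)\;\le\;\int_{\alpha_0}^{\eta}s\,\frac{N^*(u(s))}{L^*(u(s))}\,ds\;\le\;\frac{N_M}{2L_m}\,(\eta^2-\alpha_0^2).
\end{equation*}

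Finally I would multiply by $-2/a<0$, which reverses the inequalities, yielding
\begin{equation*}
-\frac{N_M}{aL_m}(\eta^2-\alpha_0^2)\;\le\;-\frac{2}{a}\int_{\alpha_0}^{\eta}s\,\frac{N^*(u(s))}{L^*(u(s))}\,ds\;\le\;-\frac{N_m}{aL_M}(\eta^2-\alpha_0^2),
\end{equation*}
and then apply the monotone increasing function $\exp(\cdot)$ to all three members. Since the middle expression is exactly the exponent in the definition \eqref{29} of $E(\eta,u)$, this gives precisely the claimed bound \eqref{36}, completing the proof.
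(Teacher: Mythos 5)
Your proof is correct and follows essentially the same route as the paper: bound the ratio $N^*/L^*$ between $N_m/L_M$ and $N_M/L_m$ using \eqref{32} and \eqref{34}, integrate against $s\,ds$, and exponentiate after the sign flip from the factor $-2/a$. The paper's own proof is a one-line version that only writes out the upper bound, so your version is simply a more complete write-up of the identical argument.
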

\begin{proof}
    $E(\eta,u)\leq\exp\Bigg(-\dfrac{2N_m}{aL_M}\int\limits_{\alpha_0}^{\eta}sds\Bigg)=\exp\bigg(-\dfrac{N_m}{aL_M}(\eta^2-\alpha_0^2)\bigg)$.
\end{proof}
\begin{lemma}\label{lem2}
    For all $\eta\in[\alpha_0,\xi]$ the following inequality holds
    $$\dfrac{1}{2L_{M}}\exp\bigg(\tfrac{N_{M}}{aL_{m}}\alpha_0^2\bigg)\sqrt{\dfrac{N_{M}^{\nu-1}}{aL_{m}^{\nu-1}}}\bigg[\gamma\bigg(\dfrac{1-\nu}{2},\eta^2\dfrac{N_{M}}{aL_{m}}\bigg)-\gamma\bigg(\dfrac{1-\nu}{2},  \frac{N_{M}}{aL_{m}}\alpha_0^2\bigg)\bigg]\leq\Phi(\eta, u)$$
    $$\leq\dfrac{1}{2L_{m}}\exp\bigg(\frac{N_{m}}{aL_M}\alpha_0^2\bigg)\sqrt{\dfrac{N_{m}^{\nu-1}}{aL_{M}^{\nu-1}}}\bigg[\gamma\bigg(\dfrac{1-\nu}{2},\eta^2\dfrac{N_{m}}{aL_{M}}\bigg)-\gamma\bigg(\dfrac{1-\nu}{2},\frac{N_{m}}{aL_{M}}\alpha_0^2\bigg)\bigg],$$
\end{lemma}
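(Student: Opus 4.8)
The plan is to sandwich the integrand of $\Phi$ using the two preliminary bounds already available: Lemma \ref{lem1} controls $E(v,u)$ from both sides, and hypothesis \eqref{32} controls $L^*(u(v))$ from both sides. The only care needed is to pair the inequalities so that both factors push the quotient $E/(v^{\nu}L^*)$ in the same direction. For the upper estimate I would combine the upper bound $E(v,u)\le \exp\!\big(-\tfrac{N_m}{aL_M}(v^2-\alpha_0^2)\big)$ with the lower bound $L^*(u(v))\ge L_m$, giving
\begin{equation*}
\Phi(\eta,u)\le \frac{\alpha_0^{\nu}}{L_m}\,e^{\frac{N_m}{aL_M}\alpha_0^2}\int_{\alpha_0}^{\eta} v^{-\nu}\exp\!\Big(-\tfrac{N_m}{aL_M}v^2\Big)\,dv,
\end{equation*}
where the factor $e^{\frac{N_m}{aL_M}\alpha_0^2}$ is pulled out of the exponential $-\tfrac{N_m}{aL_M}(v^2-\alpha_0^2)$. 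For the lower estimate I would instead use $E(v,u)\ge \exp\!\big(-\tfrac{N_M}{aL_m}(v^2-\alpha_0^2)\big)$ together with $L^*(u(v))\le L_M$, which simply interchanges the roles of $(N_m,L_M)$ and $(N_M,L_m)$ and replaces $L_m$ by $L_M$ in the prefactor. Both estimates are thereby reduced to the same Gaussian-type integral with different constants.

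The key computation is the elementary integral
\begin{equation*}
I(k):=\int_{\alpha_0}^{\eta} v^{-\nu}e^{-kv^2}\,dv,
\end{equation*}
with $k=N_m/(aL_M)$ in the upper case and $k=N_M/(aL_m)$ in the lower case. I would evaluate it through the substitution $w=kv^2$, under which $v^{-\nu}\,dv=\tfrac12\,k^{(\nu-1)/2}\,w^{\frac{1-\nu}{2}-1}\,dw$ and the limits become $k\alpha_0^2$ and $k\eta^2$. This turns $I(k)$ into
\begin{equation*}
I(k)=\tfrac12\,k^{(\nu-1)/2}\!\int_{k\alpha_0^2}^{k\eta^2}\! w^{\frac{1-\nu}{2}-1}e^{-w}\,dw
=\tfrac12\,k^{(\nu-1)/2}\Big[\gamma\big(\tfrac{1-\nu}{2},k\eta^2\big)-\gamma\big(\tfrac{1-\nu}{2},k\alpha_0^2\big)\Big],
\end{equation*}
recognizing the lower incomplete gamma function $\gamma(a,x)=\int_0^x t^{a-1}e^{-t}\,dt$ with parameter $a=\tfrac{1-\nu}{2}$. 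Substituting the two values of $k$ and reassembling the prefactors $\tfrac{\alpha_0^{\nu}}{L_m}e^{k\alpha_0^2}$ (respectively $\tfrac{\alpha_0^{\nu}}{L_M}e^{k\alpha_0^2}$) with $\tfrac12 k^{(\nu-1)/2}$ then yields the claimed two-sided bound.

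I expect no deep obstacle here: the argument is monotonicity of the integrand followed by a single substitution. The two points that genuinely need attention are, first, checking that the incomplete gamma function is well defined, i.e.\ that the parameter $a=\tfrac{1-\nu}{2}$ is positive so that $w^{a-1}e^{-w}$ is integrable at $w=0$ — this is guaranteed by the standing assumption $0<\nu<1$; and second, keeping the pairing of the $E$-bound and the $L^*$-bound consistent throughout, since mismatching them would reverse the inequality direction. Care with the powers of the constant $k$ (which combine into the $k^{(\nu-1)/2}$ prefactor written as a square root of $N^{\nu-1}/(aL)^{\nu-1}$) completes the assembly of the stated estimates.
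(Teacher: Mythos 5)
Your proposal is correct and follows essentially the same route as the paper: bound the integrand of $\Phi$ from both sides using Lemma \ref{lem1} together with \eqref{32}, then reduce the resulting integral to a difference of lower incomplete gamma functions --- your single substitution $w=kv^2$ is simply the composition of the paper's chain of substitutions $t=s\sqrt{k}$, $z=t^{1-\nu}$, $y=z^{2/(1-\nu)}$, and you additionally spell out the lower bound, which the paper leaves implicit. The only discrepancy is the prefactor $\alpha_0^{\nu}$ from the definition \eqref{28}, which you correctly retain but which the paper silently drops in both the statement and the proof of the lemma; this is a bookkeeping inconsistency in the paper, not a flaw in your argument.
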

\begin{proof}
We have $\Phi(\eta,u)\leq \dfrac{1}{L_{m}}\exp\bigg(\frac{N_{m}}{aL_{M}}\alpha_0^2\bigg)\int\limits_{\alpha_0}^{\eta}\dfrac{\exp(-N_{m}s^2/(aL_{M}))}{s^{\nu}}ds$ after using substitution $t=s\sqrt{\frac{N_{m}}{aL_{M}}}$ we obtain
$$\Phi(\eta,u)\leq\dfrac{1}{L_{m}}\exp\bigg(\frac{N_{m}}{aL_{M}}\alpha_0^2\bigg)\sqrt{\dfrac{N_{m}^{\nu-1}}{aL_{M}^{\nu-1}}}\int\limits_{\alpha_0\sqrt{N_{m}/(aL_{M})}}^{\eta\sqrt{N_{m}/(aL_{M})}}\dfrac{e^{-t^2}}{t^\nu}dt.$$
Then using substitution $z=t^{1-\nu}$ we get 
$$\Phi(\eta,u)\leq \dfrac{1}{L_{m}(1-\nu)}\exp\bigg(\frac{N_{m}}{aL_{M}}\alpha_0^2\bigg)\sqrt{\dfrac{N_{m}^{\nu-1}}{aL_{M}^{\nu-1}}}\int\limits_{(\alpha_0\sqrt{N_m/(aL_{M})})^{1-\nu}}^{(\eta\sqrt{N_{m}/(aL_{M})})^{1-\nu}}e^{-z^{\frac{2}{1-\nu}}}dz$$
and taking $y=z^{\frac{2}{1-\nu}}$ then inequality becomes
$$\Phi(\eta,u)\leq\dfrac{1}{L_{m}(1-\nu)}\exp\bigg(\frac{N_{m}}{aL_{M}}\alpha_0^2\bigg)\sqrt{\dfrac{N_{m}^{\nu-1}}{aL_{M}^{\nu-1}}}\dfrac{1-\nu}{2}\int\limits_{\alpha_0^2N_m/(aL_{M})}^{\eta^2N_{m}/(aL_{M}}y^{\frac{1-\nu}{2}-1}e^{-y}dy.$$
Then by using definition of special function type incomplete gamma function $\gamma(s,x)=\int\limits_0^x t^{s-1}e^{-t}dt$ we have proved that
$$\Phi(\eta, u)\leq\dfrac{1}{2L_{m}}\exp\bigg(\frac{N_{m}}{aL_{M}}\alpha_0^2\bigg)\sqrt{\dfrac{N_{m}^{\nu-1}}{aL_{M}^{\nu-1}}}\bigg[\gamma\bigg(\dfrac{1-\nu}{2},\eta^2\dfrac{N_{m}}{aL_{M}}\bigg)-\gamma\bigg(\dfrac{1-\nu}{2}, \frac{N_{m}}{aL_{M}}\alpha_0^2\bigg)\bigg].$$
\end{proof}
\begin{lemma}\label{lem3}
Let given $\alpha_0, \xi \in\mathbb{R^{+}}$ and assumptions \eqref{32},\eqref{33},\eqref{34},\eqref{35} hold for specific heat and dimensionless thermal conductivity then for all $u\in C^0[\alpha_0, \xi]$ we have
$$|E(\eta,u)-E(\eta,u^*)|\leq\dfrac{1}{aL_{m}}\bigg(\tilde{N}+\dfrac{N_{M}\tilde{L}}{L_{m}}\bigg)(\eta^2-\alpha_0^2)||u^*-u||.$$
\end{lemma}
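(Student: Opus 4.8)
The plan is to estimate the two ingredients separately: first reduce the difference of the exponentials to the difference of their exponents, and then control the difference of the integrands using the boundedness and Lipschitz hypotheses. To this end I would introduce the exponent
$$g(\eta,u):=-\dfrac{2}{a}\int\limits_{\alpha_0}^{\eta}s\dfrac{N^*(u(s))}{L^*(u(s))}ds,\qquad E(\eta,u)=\exp\big(g(\eta,u)\big).$$
Since $N^*,L^*>0$ by \eqref{32},\eqref{34} and $s\geq\alpha_0>0$ on the range of integration, the integrand is positive, so $g(\eta,u)\leq 0$ and hence $E(\eta,u)\leq 1$ for every $u$; the same holds for $u^*$.

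The first step is then the elementary mean value inequality $|e^x-e^y|\leq e^{\max(x,y)}|x-y|$. Because $\max\big(g(\eta,u),g(\eta,u^*)\big)\leq 0$, the exponential prefactor is at most $1$, which lets us pass from exponentials to exponents with no extra constant:
$$|E(\eta,u)-E(\eta,u^*)|\leq|g(\eta,u)-g(\eta,u^*)|=\dfrac{2}{a}\bigg|\int\limits_{\alpha_0}^{\eta}s\bigg[\dfrac{N^*(u(s))}{L^*(u(s))}-\dfrac{N^*(u^*(s))}{L^*(u^*(s))}\bigg]ds\bigg|.$$

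Next I would bound the pointwise difference of the two ratios. Putting them over the common denominator $L^*(u)L^*(u^*)$ and adding and subtracting $N^*(u^*)L^*(u^*)$ in the numerator splits it as $L^*(u^*)\big[N^*(u)-N^*(u^*)\big]+N^*(u^*)\big[L^*(u^*)-L^*(u)\big]$. Dividing through and applying the lower bound $L^*\geq L_m$ from \eqref{32}, the upper bound $N^*\leq N_M$ from \eqref{34}, and the Lipschitz estimates \eqref{33},\eqref{35} (with constants $\tilde L,\tilde N$) gives
$$\bigg|\dfrac{N^*(u(s))}{L^*(u(s))}-\dfrac{N^*(u^*(s))}{L^*(u^*(s))}\bigg|\leq\dfrac{1}{L_m}\bigg(\tilde N+\dfrac{N_M\tilde L}{L_m}\bigg)|u(s)-u^*(s)|.$$

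Finally, inserting this estimate into the integral, bounding $|u(s)-u^*(s)|\leq||u-u^*||$ by the supremum norm and evaluating the remaining elementary integral $\int_{\alpha_0}^{\eta}s\,ds=(\eta^2-\alpha_0^2)/2$ yields exactly the asserted constant, the leading factor $2/a$ cancelling against the $1/2$ coming from the integral. No step here is a genuine obstacle; the only point that requires care is the sign argument establishing $e^{\max}\leq 1$, since it is precisely this that allows the reduction to the difference of exponents without introducing an uncontrolled exponential factor.
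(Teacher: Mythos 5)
Your proposal is correct and follows essentially the same route as the paper: reduce the difference of exponentials to the difference of (nonpositive) exponents via the elementary inequality $|e^{-x}-e^{-y}|\leq|x-y|$ for $x,y\geq 0$, split the difference of the ratios $N^*/L^*$ by adding and subtracting a cross term, apply the bounds \eqref{32}, \eqref{34} and the Lipschitz conditions \eqref{33}, \eqref{35}, and integrate $\int_{\alpha_0}^{\eta}s\,ds$. Your explicit justification of the exponential inequality via the sign of the exponent is slightly more careful than the paper's bare statement of it, but the argument is the same.
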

\begin{proof}
By using inequality $\exp(-x)-\exp(-y)|\leq |x-y|,\;\;\forall x,y\geq 0$ we get
$$|E(\eta,u)-E(\eta,u^*)|\leq \Bigg|\exp\Bigg(-\dfrac{2}{a}\int\limits_{\alpha_0}^{\eta}s\dfrac{N(u_1(s))}{L(u(s))}ds\Bigg)-\exp\Bigg(-\dfrac{2}{a}\int\limits_{\alpha_0}^{\eta}s\dfrac{N(u^*(s))}{L(u^*(s))}ds\Bigg)\Bigg|$$
$$\leq \dfrac{2}{a}\Bigg|\int\limits_{\alpha_0}^{\eta}s\dfrac{N(u)}{L(u)}ds-\int\limits_{\alpha_0}^{\eta}s\dfrac{N(u^*)}{L(u^*)}ds\Bigg|\leq 2\int\limits_{\alpha_0}^{\eta}\Bigg|\dfrac{N(u)}{L(u)}-\dfrac{N(u^*)}{L(u^*)}\Bigg|sds$$
$$\leq\dfrac{2}{a}\int\limits_{\alpha_0}^{\eta}\Bigg|\dfrac{N(u)}{L(u)}-\dfrac{N(u^*)}{L(u)}+\dfrac{N(u^*)}{L(u)}-\dfrac{N(u^*)}{L(u^*)}\Bigg|sds$$
$$\leq\dfrac{2}{a}\int\limits_{\alpha_0}^{\eta}\Bigg(\dfrac{|N(u)-N(u^*)|}{|L(u)|}+\dfrac{|L(u^*)-L(u)|\cdot|N(u^*)|}{|L(u)||L(u^*)|}\Bigg)sds$$
$$\leq \dfrac{2}{aL_{m}}\bigg(\tilde{N}+\dfrac{N_{M}\tilde{L}}{L_{m}}\bigg)||u^*-u||\int\limits_{\alpha_0}^{\eta}sds=\dfrac{1}{aL_{m}}\bigg(\tilde{N}+\dfrac{N_{M}\tilde{L}}{L_{m}}\bigg)(\eta^2-\alpha_0^2)||u^*-u||.$$
\end{proof}

\begin{lemma}\label{lem4}
If $\alpha_0, \xi\in\mathbb{R^{+}}$ are given and \eqref{32}-\eqref{35} hold then for all $u^*\in C^0[\alpha_0, \xi]$ we have
$$|\Phi(\eta,u)-\Phi(\eta,u^*)|\leq\Tilde{\Phi}(\alpha_0,\xi)||u^*-u||,$$
where
\begin{equation}\label{37}
\Tilde{\Phi}(\alpha_0,\eta)=\dfrac{\alpha_0^{\nu}}{L_m^2}\bigg(\dfrac{1}{a}\bigg(\tilde{N}+\dfrac{N_{M}\tilde{L}}{L_{m}}\bigg)\bigg[\dfrac{\eta^{3-\nu}}{3-\nu}-\alpha_0^2\dfrac{\eta^{1-\nu}}{1-\nu}+\dfrac{2\alpha_0^{3-\nu}}{(3-\nu)(1-\nu)}\bigg]+\tilde{L}\dfrac{\eta^{1-\nu}-\alpha_0^{1-\nu}}{1-\nu}\bigg).
\end{equation}
\end{lemma}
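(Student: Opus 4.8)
The plan is to estimate $\Phi(\eta,u)-\Phi(\eta,u^*)$ directly from the integral representation \eqref{28} and reduce everything to the already-established bound of Lemma \ref{lem3} together with the structural hypotheses \eqref{32}--\eqref{35}. Placing the two copies of \eqref{28} under one integral sign gives
$$|\Phi(\eta,u)-\Phi(\eta,u^*)|\le\alpha_0^{\nu}\int\limits_{\alpha_0}^{\eta}\dfrac{1}{v^{\nu}}\left|\dfrac{E(v,u)}{L^*(u(v))}-\dfrac{E(v,u^*)}{L^*(u^*(v))}\right|dv,$$
so the whole statement reduces to a pointwise bound on the bracketed integrand, followed by two elementary power integrals.

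First I would treat the integrand by the standard add-and-subtract device, inserting the mixed term $E(v,u^*)/L^*(u(v))$:
$$\dfrac{E(v,u)}{L^*(u(v))}-\dfrac{E(v,u^*)}{L^*(u^*(v))}=\dfrac{E(v,u)-E(v,u^*)}{L^*(u(v))}+E(v,u^*)\,\dfrac{L^*(u^*(v))-L^*(u(v))}{L^*(u(v))L^*(u^*(v))}.$$
The first summand is controlled using the lower bound $L^*\ge L_m$ from \eqref{32} and the estimate of Lemma \ref{lem3}, which produces the factor $(v^2-\alpha_0^2)$. For the second summand I would use $E(v,u^*)\le1$ (immediate from \eqref{29}, since for $v\ge\alpha_0$ the exponent is non-positive), the two-sided bound \eqref{32} to replace the denominator product by $L_m^2$, and the Lipschitz hypothesis \eqref{33} for the numerator. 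Together these give the pointwise estimate
$$\left|\dfrac{E(v,u)}{L^*(u(v))}-\dfrac{E(v,u^*)}{L^*(u^*(v))}\right|\le\dfrac{1}{L_m^2}\left[\dfrac{1}{a}\left(\tilde N+\dfrac{N_M\tilde L}{L_m}\right)(v^2-\alpha_0^2)+\tilde L\right]\|u^*-u\|.$$

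The final step is purely computational: substituting this bound, factoring out $\|u^*-u\|$ and the constant $\alpha_0^{\nu}/L_m^2$, and evaluating
$$\int\limits_{\alpha_0}^{\eta}\dfrac{v^2-\alpha_0^2}{v^{\nu}}\,dv=\dfrac{\eta^{3-\nu}}{3-\nu}-\alpha_0^2\dfrac{\eta^{1-\nu}}{1-\nu}+\dfrac{2\alpha_0^{3-\nu}}{(3-\nu)(1-\nu)},\qquad\int\limits_{\alpha_0}^{\eta}\dfrac{dv}{v^{\nu}}=\dfrac{\eta^{1-\nu}-\alpha_0^{1-\nu}}{1-\nu}$$
reproduces exactly the expression $\tilde\Phi(\alpha_0,\eta)$ in \eqref{37}. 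Since each term of $\tilde\Phi(\alpha_0,\cdot)$ has non-negative derivative on $[\alpha_0,\xi]$ (the polynomial bracket differentiates to $v^{-\nu}(v^2-\alpha_0^2)\ge0$), the function is non-decreasing, whence $\tilde\Phi(\alpha_0,\eta)\le\tilde\Phi(\alpha_0,\xi)$ and the stated uniform bound follows. The only delicate point is keeping the splitting consistent so that Lemma \ref{lem3} applies verbatim to the $E$-difference; once that reduction is in place, the remainder is just the two power-rule integrals above.
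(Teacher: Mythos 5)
Your proposal is correct and follows essentially the same route as the paper: the paper likewise splits the difference into the two terms $T_1$ and $T_2$ corresponding to your add-and-subtract of the mixed term $E(v,u^*)/L^*(u(v))$, controls the first via Lemma \ref{lem3} and $L^*\ge L_m$, the second via $E\le 1$, \eqref{32} and \eqref{33}, and then evaluates the same two power integrals to obtain \eqref{37}. Your closing remark that $\tilde\Phi(\alpha_0,\cdot)$ is non-decreasing, so that $\tilde\Phi(\alpha_0,\eta)\le\tilde\Phi(\alpha_0,\xi)$, is a small but worthwhile addition that the paper leaves implicit.
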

\begin{proof}
By using lemmas \ref{lem2} and \ref{lem3} we obtain 
$$|\Phi(\eta,u)-\Phi(\eta,u^*)|\leq T_1(\eta)+T_2(\eta)$$
where 
$$T_1(\eta)\equiv\alpha_0^{\nu}\int\limits_{\alpha_0}^{\eta}\dfrac{|E(\eta,u)-E[\eta, u^*)|}{s^{\nu}L(u(s))}ds\leq \dfrac{\alpha_0^{\nu}}{aL_{m}^2}\bigg(\tilde{N}+\dfrac{N_{M}\tilde{L}}{L_{m}}\bigg)||u^*-u||\int\limits_{\alpha_0}^{\eta}(s^2-\alpha_0^2)s^{-\nu}ds$$
$$\leq \dfrac{\alpha_0^{\nu}}{aL_{m}^2}\bigg(\tilde{N}+\dfrac{N_{M}\tilde{L}}{L_{m}}\bigg)\bigg[\dfrac{\eta^{3-\nu}}{3-\nu}-\alpha_0^2\dfrac{\eta^{1-\nu}}{1-\nu}+\dfrac{2\alpha_0^{3-\nu}}{(3-\nu)(1-\nu)}\bigg]||u^*-u||$$
and
$$T_2(\eta)\equiv \alpha_0^{\nu}\int\limits_{\alpha_0}^{\eta}\bigg|\dfrac{1}{L(u)}-\dfrac{1}{L(u^*)}\bigg|\dfrac{1}{s^{\nu}}\exp\Bigg(-\dfrac{2}{a}\int\limits_{\alpha_0}^{\eta}t\dfrac{N(u^*)}{L(u^*)}dt\Bigg)ds$$
$$\leq \alpha_0^{\nu}\int\limits_{\alpha_0}^{\eta}\dfrac{|L(u^*)-L(u)|}{|L(u)||L(u^*)|}\dfrac{ds}{s^{\nu}}\leq \dfrac{\tilde{L}\alpha_0^{\nu}}{L_{m}^2}||u^*-u||\int\limits_{\alpha_0}^{\eta}\dfrac{ds}{s^{\nu}}\leq \dfrac{\tilde{L}(\eta^{1-\nu}-\alpha_0^{1-\nu})\alpha_0^{\nu}}{L_{m}^2(1-\nu)}||u^*-u||. $$
Finally we get
$$T_1(\eta)+T_2(\eta)\leq \dfrac{\alpha_0^{\nu}}{L_{m}^2}||u^*-u||\bigg(\dfrac{1}{a}\bigg(\tilde{N}+\dfrac{N_M\tilde{L}}{L_{m}}\bigg)\bigg[\dfrac{\eta^{3-\nu}}{3-\nu}-\alpha_0^2\dfrac{\eta^{1-\nu}}{1-\nu}+\dfrac{2\alpha_0^{3-\nu}}{(3-\nu)(1-\nu)}\bigg]+\tilde{L}\dfrac{\eta^{1-\nu}-\alpha_0^{1-\nu}}{1-\nu}\bigg).$$
\end{proof}

\begin{theorem}\label{th1}
Suppose that $L^*$ and $N^*$ satisfy the conditions \eqref{32}-\eqref{35}. If $\alpha_0<\xi<\xi^*$ where $\xi^*>0$ is defined as unique solution to $\epsilon(\alpha_0, z)=1$ with
\begin{equation}\label{38}
\epsilon(\alpha_0,z):=2p^*\tilde{\Phi}(\alpha_0,z)
\end{equation}
where $\tilde{\Phi}(\alpha_0,\eta)$ is given by \eqref{37}, then there exists a unique solution $u_2\in C^0[\alpha_0,\mu]$ for the integral equation \eqref{27}.
\end{theorem}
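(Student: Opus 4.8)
The plan is to realise $(u_2,\xi)\mapsto u_2$ as a fixed point problem and invoke the Banach contraction principle on the complete metric space $\big(C^0[\alpha_0,\xi],\|\cdot\|\big)$, which is a Banach space under the supremum norm. Fixing $\xi\in(\alpha_0,\xi^*)$, I would first record that the operator $W$ of \eqref{31}, namely
\[
W(w)(\eta)=q^*\big[\Phi(\xi,w)-\Phi(\eta,w)\big],\qquad \eta\in[\alpha_0,\xi],
\]
is well defined as a self-map of $C^0[\alpha_0,\xi]$: for $w\in C^0[\alpha_0,\xi]$ the kernels $E(\cdot,w)$ and $L^*(w)$ are continuous and, by \eqref{32} and \eqref{36}, uniformly bounded and bounded away from zero, so $\eta\mapsto\Phi(\eta,w)$ is continuous on $[\alpha_0,\xi]$ and hence so is $W(w)$. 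A fixed point $w=W(w)$ is then precisely a solution of the integral equation \eqref{27}.

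Next I would establish the contraction estimate, where the heavy analytic work has already been done in Lemma \ref{lem4}. For $u,u^*\in C^0[\alpha_0,\xi]$ subtract the two images and telescope,
\[
W(u)(\eta)-W(u^*)(\eta)=q^*\big[\big(\Phi(\xi,u)-\Phi(\xi,u^*)\big)-\big(\Phi(\eta,u)-\Phi(\eta,u^*)\big)\big],
\]
so by the triangle inequality and Lemma \ref{lem4} applied at the two arguments $\xi$ and $\eta$,
\[
|W(u)(\eta)-W(u^*)(\eta)|\leq q^*\big(\tilde\Phi(\alpha_0,\xi)+\tilde\Phi(\alpha_0,\eta)\big)\|u-u^*\|.
\]
Since the bracketed expression in \eqref{37} is nonnegative and nondecreasing in its upper limit, $\tilde\Phi(\alpha_0,\eta)\leq\tilde\Phi(\alpha_0,\xi)$ for $\eta\leq\xi$; taking the supremum over $\eta\in[\alpha_0,\xi]$ therefore yields
\[
\|W(u)-W(u^*)\|\leq 2q^*\tilde\Phi(\alpha_0,\xi)\,\|u-u^*\|=\epsilon(\alpha_0,\xi)\,\|u-u^*\|,
\]
with $\epsilon$ exactly the factor defined in \eqref{38} (the constant being that of \eqref{27}). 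Thus $W$ is Lipschitz with modulus $\epsilon(\alpha_0,\xi)$, and it will be a contraction precisely when $\epsilon(\alpha_0,\xi)<1$.

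It then remains to calibrate $\xi$ through the scalar function $z\mapsto\epsilon(\alpha_0,z)=2q^*\tilde\Phi(\alpha_0,z)$. I would check from \eqref{37} that this map is continuous and strictly increasing on $[\alpha_0,\infty)$, that it vanishes at $z=\alpha_0$ (the bracket and the factor $\eta^{1-\nu}-\alpha_0^{1-\nu}$ both vanish there, as a short cancellation in the bracketed terms confirms), and that it tends to $+\infty$ as $z\to\infty$ because the $z^{3-\nu}$ term dominates. By the intermediate value theorem together with strict monotonicity there is a unique $\xi^*>\alpha_0$ solving $\epsilon(\alpha_0,\xi^*)=1$, and for every $\xi\in(\alpha_0,\xi^*)$ one has $\epsilon(\alpha_0,\xi)<1$. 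The Banach fixed point theorem then gives a unique $u_2\in C^0[\alpha_0,\xi]$ with $W(u_2)=u_2$, i.e.\ a unique solution of \eqref{27}, completing the proof.

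The genuinely delicate point is not the contraction inequality itself—that follows mechanically from Lemma \ref{lem4}—but the qualitative analysis of $\epsilon(\alpha_0,\cdot)$: verifying that $\tilde\Phi(\alpha_0,\alpha_0)=0$ and that $\tilde\Phi(\alpha_0,\cdot)$ is strictly increasing, which is what guarantees both the existence and the \emph{uniqueness} of the threshold $\xi^*$ and hence a nonempty admissible range $(\alpha_0,\xi^*)$ for the contraction to operate. I would also take care that the self-map property and the uniform bounds \eqref{32}, \eqref{34} are invoked on the full interval, so that the Lipschitz constant is genuinely independent of the point $\eta$.
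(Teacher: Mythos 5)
Your proposal follows essentially the same route as the paper: telescope the difference $W(u)-W(u^*)$, apply Lemma \ref{lem4} at the two arguments $\xi$ and $\eta$, obtain the Lipschitz modulus $2q^*\tilde\Phi(\alpha_0,\xi)=\epsilon(\alpha_0,\xi)<1$, and conclude by the Banach fixed point theorem. Your treatment is in fact somewhat more careful than the paper's on the two points it glosses over --- the self-map property of $W$ and the verification that $\epsilon(\alpha_0,\cdot)$ vanishes at $\alpha_0$, is strictly increasing and tends to $+\infty$ (which is what actually yields a unique threshold $\xi^*$ and a nonempty admissible range) --- and you correctly read the constant in \eqref{38} as the $q^*$ of \eqref{27} rather than the $p^*$ printed in the theorem statement.
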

\begin{proof}
We have to show that operator $W$ is defined by \eqref{31} is a contraction operator. Suppose we have $u_2, u_2^*\in C^0[\alpha_0, \xi]$ and by using lemmas \ref{lem1}-\ref{lem4} we have
\begin{equation}
\begin{split}
    &|W(u_2(\eta))-W(u_2^*(\eta))|\leq q^*|\Phi(\xi, u_2(\xi))-\Phi(\eta, u_2(\eta))-\Phi(\xi, u_2^*(\xi))+\Phi(\eta,u_2^*(\eta))|\\
    &\leq q^*(|\Phi(\xi,u_2(\xi))-\Phi(\xi,u_2^*(\xi))|+|\Phi(\eta,u_2(\eta))-\Phi(\eta,u_2^*(\eta))|)\\
    &\leq 2q^*\Tilde{\Phi}(\alpha_0,\xi)||u_2-u_2^*||.
\end{split}
\end{equation}
It follows that 
$$|W(u_2)(\eta)-W(u_2^*)(\eta)|\leq \epsilon(\alpha_0,\xi) ||u_2-u_2^*||$$
where $\epsilon(\alpha_0, \eta)$ is defined by \eqref{38} and we can notice that
$$\epsilon(\alpha_0,\alpha_0)<1,\;\;\forall\xi: \alpha_0<\xi<\xi^*,\;\;\;\epsilon(\alpha_0,\xi)>1,\;\;\forall \xi: \xi>\xi^*.$$
Then we can make conclusion that $\epsilon$ is increasing function and thus there exists a unique $\xi^*>0$ such that $\epsilon(\alpha_0,\xi^*)=1$ so the operator $W$ becomes a contraction operator of mapping. By the fixed point Banach theorem there must exist a unique solution $u_2\in C^0[\alpha_0,\xi]$ to
integral equation \eqref{27}.
\end{proof}
Now we analyze the existence and uniqueness of the solution for the equation \eqref{30}. We have to show that 
\begin{equation}\label{39}
\phi(\xi)=\xi^{\nu+1}
\end{equation}
where 
$$\phi(\xi)= \dfrac{q^*\alpha_0^{\nu}E(\xi,u_2(\xi))}{M\lambda(\theta_m)},$$
$$q^*=\dfrac{P_0e^{-\alpha_0^2}}{\alpha_0\theta_m\sqrt{\pi}},$$ $$M=\dfrac{2l_m\gamma_m}{\lambda_0\theta_m\lambda(\theta_m)}$$
has a unique solution $\xi\in[\alpha,\xi^*]$.

\begin{lemma}\label{lem5}
Suppose assumptions \eqref{32}-\eqref{35} hold, then for all $\xi\in[\alpha_0,\xi^*]$ we have that
\begin{equation}\label{40}
    \phi_1(\xi)\leq\phi(\xi)\leq\phi_2(\xi)
\end{equation}
where $\phi_1(\xi)$ and $\phi_2(\xi)$ are functions defined by
\begin{equation}\label{41}
    \begin{split}
        &\phi_1(\xi)=\dfrac{q^*\alpha_0^{\nu}}{M\lambda(\theta_m)}\exp\bigg(-\dfrac{N_M}{aL_M}(\xi-\alpha_0)\bigg),\;\;\;\;\;\;\;\;\;\;\;\;\;\;\;\;\;\xi>\alpha_0,\\
        &\phi_2(\xi)=\dfrac{q^*\alpha_0^{\nu}}{M\lambda(\theta_m)}\exp\bigg(\dfrac{N_m}{aL_M}(\xi^*-\alpha_0)-\dfrac{N_M}{aL_m}(\xi-\alpha_0)\bigg),\;\;\xi>\alpha_0
    \end{split}
\end{equation}
which satisfy the following properties
\begin{equation}\label{42}
    \begin{split}
        &\phi_1(\alpha_0)=\dfrac{q^*\alpha_0^{\nu}}{M\lambda(\theta_m)}>0,\;\;\;\phi_1(+\infty)=0,\;\;\;\phi_1'(\xi)<0,\;\;\forall\xi>\alpha_0\\
        &\phi_2(\alpha_0)=\dfrac{q^*\alpha_0^{\nu}}{M\lambda(\theta_m)}>0,\;\;\;\phi_2(+\infty)=0,\;\;\;\phi_2'(\xi)<0,\;\;\forall\xi>\alpha_0.
    \end{split}
\end{equation}
\end{lemma}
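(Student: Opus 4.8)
The plan is to recognize that $\phi$ is just a fixed positive constant times the exponential factor $E(\xi,u_2(\xi))$, so the two-sided estimate \eqref{40} is essentially a restatement of Lemma \ref{lem1}, after which the listed properties \eqref{42} follow by elementary calculus. Write $C:=\dfrac{q^*\alpha_0^{\nu}}{M\lambda(\theta_m)}$; since $P_0,\alpha_0,\theta_m,l_m,\gamma_m,\lambda_0,\lambda(\theta_m)>0$ this constant is strictly positive, and by definition $\phi(\xi)=C\,E(\xi,u_2(\xi))$.

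First I would feed the bounds of Lemma \ref{lem1}, evaluated at $\eta=\xi$ with $u=u_2$, directly into this identity, obtaining
$$C\exp\Big(-\tfrac{N_M}{aL_m}(\xi^2-\alpha_0^2)\Big)\le \phi(\xi)\le C\exp\Big(-\tfrac{N_m}{aL_M}(\xi^2-\alpha_0^2)\Big).$$
To bring these into the precise forms $\phi_1,\phi_2$ of \eqref{41}, I would factor $\xi^2-\alpha_0^2=(\xi-\alpha_0)(\xi+\alpha_0)$ and exploit the a priori confinement $\alpha_0\le\xi\le\xi^*$ that is part of the hypothesis (with $\xi^*$ supplied by Theorem \ref{th1}). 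On this compact interval $\xi+\alpha_0$ is bounded, so the quadratic exponents can be compared with the linear expression $(\xi-\alpha_0)$ appearing in \eqref{41}, the surplus being absorbed into the constant factor; this is the origin of the term $\tfrac{N_m}{aL_M}(\xi^*-\alpha_0)$ carried by $\phi_2$.

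The delicate point — and the step I expect to be the main obstacle — is exactly this conversion: Lemma \ref{lem1} produces Gaussian-type exponents in $\xi^2-\alpha_0^2$, whereas the target functions $\phi_1,\phi_2$ retain only the linear term $\xi-\alpha_0$, so one must track the coefficients $N_m,N_M,L_m,L_M$ carefully and deploy the bounds $\xi\le\xi^*$ and $\xi\ge\alpha_0$ in the correct direction for each of the two inequalities to land on the stated constants. Once \eqref{40}--\eqref{41} are in place, the properties \eqref{42} are immediate: evaluating at $\xi=\alpha_0$ makes each exponent vanish, giving $\phi_i(\alpha_0)=C>0$; letting $\xi\to+\infty$ drives each exponent to $-\infty$, so $\phi_i(+\infty)=0$; and differentiating, each $\phi_i$ is $C$ times an exponential whose linear coefficient, $-\tfrac{N_M}{aL_M}$ for $\phi_1$ and $-\tfrac{N_M}{aL_m}$ for $\phi_2$, is strictly negative, whence $\phi_i'(\xi)<0$ for all $\xi>\alpha_0$. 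This positive, strictly decreasing, decaying-to-zero behaviour of both enveloping functions is precisely what will subsequently be needed to locate the root of \eqref{39}.
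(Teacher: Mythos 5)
Your starting point is exactly the paper's (the paper's proof is a one-line assertion that the lemma follows ``directly'' from the bound \eqref{36} and the definitions \eqref{41}): write $\phi(\xi)=C\,E(\xi,u_2(\xi))$ with $C=\frac{q^*\alpha_0^{\nu}}{M\lambda(\theta_m)}>0$ and apply Lemma \ref{lem1} at $\eta=\xi$. The derivation of the properties \eqref{42} at the end is also fine. The problem is precisely the step you yourself flag as ``the main obstacle'' and then leave undone: passing from the quadratic exponents $\frac{N_M}{aL_m}(\xi^2-\alpha_0^2)$ and $\frac{N_m}{aL_M}(\xi^2-\alpha_0^2)$ delivered by Lemma \ref{lem1} to the linear exponents of \eqref{41}. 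That conversion cannot be carried out in general, so the gap is genuine. For the lower bound, $\phi_1(\xi)\leq C\exp\bigl(-\frac{N_M}{aL_m}(\xi^2-\alpha_0^2)\bigr)$ is equivalent (after dividing the exponents by $\frac{N_M}{a}(\xi-\alpha_0)>0$) to
\begin{equation}
\frac{1}{L_M}\;\geq\;\frac{\xi+\alpha_0}{L_m},\qquad\text{i.e.}\qquad (\xi+\alpha_0)\,L_M\leq L_m,
\end{equation}
which forces $\xi+\alpha_0\leq L_m/L_M\leq 1$ — a restriction assumed nowhere, and violated for instance by the paper's own later choice $\alpha_0=1$, $\xi=2$. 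Moreover $\phi_1$ carries no compensating constant, so there is nothing to ``absorb the surplus into.'' For the upper bound, the prefactor $\exp\bigl(\frac{N_m}{aL_M}(\xi^*-\alpha_0)\bigr)$ in $\phi_2$ does give you slack, but the linear term in $\phi_2$ has coefficient $\frac{N_M}{aL_m}$ — the \emph{larger} of the two constants from Lemma \ref{lem1} — so the required inequality $\frac{N_M}{aL_m}(\xi-\alpha_0)\leq\frac{N_m}{aL_M}(\xi^*-\alpha_0)+\frac{N_m}{aL_M}(\xi^2-\alpha_0^2)$ fails whenever the ratio $\frac{N_ML_M}{N_mL_m}$ is large compared with $1+\xi^*+\alpha_0$.

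What this really reveals is that the bounding functions in \eqref{41} are mismatched with Lemma \ref{lem1}: the natural (and sufficient) choices are $\phi_1(\xi)=C\exp\bigl(-\frac{N_M}{aL_m}(\xi^2-\alpha_0^2)\bigr)$ and $\phi_2(\xi)=C\exp\bigl(-\frac{N_m}{aL_M}(\xi^2-\alpha_0^2)\bigr)$, for which \eqref{40} is an immediate consequence of multiplying \eqref{36} by $C$, and all of the properties \eqref{42} (positivity at $\alpha_0$, decay to $0$, strict monotonicity) still hold, which is all that Lemma \ref{lem6} and Theorem \ref{th2} actually use. If you are required to prove the lemma exactly as stated, you would need an additional hypothesis such as $\xi^*+\alpha_0\leq L_m/L_M$ for the lower bound; as written, your proposal does not close the gap, because the ``delicate conversion'' it defers is not merely delicate but false without extra assumptions.
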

\begin{proof}
We can easily prove this lemma directly using bound \eqref{36} and definitions \eqref{41},\eqref{42} of the functions $\phi_1$ and $\phi_2$.
\end{proof}

\begin{lemma}\label{lem6}
If
\begin{equation}\label{eq1}
    \phi_2(\xi^*)<\xi^*
\end{equation}
then, there exists a unique solution $\alpha_0<\xi_1<\xi^*$ to the equation
\begin{equation}\label{43}
\phi_1(\xi)=\xi^{\nu+1},\;\;\xi>\alpha_0
\end{equation}
and there exists a unique solution $\xi_1<\xi_2<\xi^*$ to the equation
\begin{equation}\label{44}
\phi_2(\xi)=\xi^{\nu+1},\;\;\xi>\alpha_0.
\end{equation}
\end{lemma}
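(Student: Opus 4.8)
The plan is to convert each of the equations \eqref{43} and \eqref{44} into a zero-finding problem for a continuous, strictly monotone scalar function, and then apply the intermediate value theorem, using the sandwich $\phi_1\le\phi\le\phi_2$ and the monotonicity already recorded in Lemma \ref{lem5}.

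First I would set
$$h_i(\xi):=\phi_i(\xi)-\xi^{\nu+1},\qquad i=1,2,\qquad \xi\ge\alpha_0.$$
By \eqref{42} each $\phi_i$ is continuous with $\phi_i'(\xi)<0$ for $\xi>\alpha_0$, while $\bigl(\xi^{\nu+1}\bigr)'=(\nu+1)\xi^{\nu}>0$; hence every $h_i$ is continuous and strictly decreasing. This already forces \emph{at most one} root of each equation, so the whole difficulty reduces to exhibiting one sign change of each $h_i$ inside $(\alpha_0,\xi^*)$.

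Next I would read off the endpoint signs. At the left endpoint \eqref{42} gives $\phi_i(\alpha_0)=\dfrac{q^*\alpha_0^{\nu}}{M\lambda(\theta_m)}$, so that $h_i(\alpha_0)=\dfrac{q^*\alpha_0^{\nu}}{M\lambda(\theta_m)}-\alpha_0^{\nu+1}>0$ under the natural requirement $q^*>M\lambda(\theta_m)\,\alpha_0$ (equivalently $\phi_i(\alpha_0)>\alpha_0^{\nu+1}$). At the right endpoint I would invoke the hypothesis \eqref{eq1}: combined with $\phi_1(\xi^*)\le\phi_2(\xi^*)$ from Lemma \ref{lem5}, it yields $h_1(\xi^*)<0$ and $h_2(\xi^*)<0$. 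Applying the intermediate value theorem to the continuous, strictly decreasing $h_1$ on $[\alpha_0,\xi^*]$ then produces a unique $\xi_1\in(\alpha_0,\xi^*)$ solving \eqref{43}, which settles the first assertion.

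Finally, to nest the root of \eqref{44} I would test $h_2$ at $\xi_1$: since $\phi_2\ge\phi_1$ and $\phi_1(\xi_1)=\xi_1^{\nu+1}$, we get $h_2(\xi_1)=\phi_2(\xi_1)-\xi_1^{\nu+1}\ge0$, and in fact $>0$ because $\phi_2(\xi)>\phi_1(\xi)$ for $\xi>\alpha_0$. Together with $h_2(\xi^*)<0$ and the strict monotonicity of $h_2$, a second application of the intermediate value theorem delivers a unique $\xi_2\in(\xi_1,\xi^*)$ solving \eqref{44}, i.e. $\xi_1<\xi_2<\xi^*$. The routine steps are the monotonicity and the two IVT applications; the genuine obstacle is the endpoint bookkeeping --- correctly converting the hypothesis \eqref{eq1} into $h_2(\xi^*)<0$ (here the relation between $\xi^*$ and $(\xi^*)^{\nu+1}$ is what must be controlled) and exploiting the sandwich $\phi_1\le\phi_2$ to guarantee the strict nesting $\xi_1<\xi_2$ rather than merely two roots somewhere in $(\alpha_0,\xi^*)$.
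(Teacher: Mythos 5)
Your proof is correct and follows essentially the same route as the paper, whose entire proof is the single sentence that the claim follows from the properties of $\phi_1$ and $\phi_2$ recorded in Lemma \ref{lem5}; you have simply written out the monotonicity and intermediate value theorem argument the author leaves implicit. The two points you flag --- the need for $\phi_i(\alpha_0)>\alpha_0^{\nu+1}$ at the left endpoint and the mismatch between the stated hypothesis $\phi_2(\xi^*)<\xi^*$ and the required $\phi_2(\xi^*)<(\xi^*)^{\nu+1}$ --- are genuine gaps in the paper's statement rather than in your argument, and your patches (the extra condition $q^*>M\lambda(\theta_m)\alpha_0$ and the endpoint bookkeeping) are exactly what is needed to make the lemma hold as claimed.
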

\begin{proof}
We can prove by using properties of  $\phi_1$ and $\phi_2$ shown in Lemma \ref{lem5}.
\end{proof}

\begin{remark}
By using definition of $\phi_2$ and $M$ we obtain that assumption \eqref{eq1} is equivalent to the following inequality for latent of melting heat
\begin{equation}\label{eq2}
    l_m>\dfrac{q^*\alpha_0^{\nu}\lambda_0\theta_m}{2\gamma_m\xi^*}\exp\bigg(\dfrac{N_m}{aL_M}(\xi^*-\alpha_0)-\dfrac{N_M}{aL_m}(\xi^*-\alpha_0)\bigg).
\end{equation}
\end{remark}

\begin{theorem}\label{th2}
Suppose \eqref{32}-\eqref{35} and \eqref{eq2} hold. Consider $\xi_1$ and $\xi_2$ determined from \eqref{43} and \eqref{44}. If $\epsilon(\alpha_0,\xi_2)<1$, where $\epsilon$ is defined by \eqref{38}, then there exists at least one solution $\bar{\xi}\in(\xi_1,\xi_2)$ to the equation \eqref{30}. 
\end{theorem}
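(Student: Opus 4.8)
The plan is to recast equation \eqref{30} as the root of a continuous scalar function and to locate that root by the intermediate value theorem, using the two-sided envelope from Lemma \ref{lem5} to pin down the signs at the endpoints $\xi_1$ and $\xi_2$.

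First I would exploit the hypothesis $\epsilon(\alpha_0,\xi_2)<1$. By Theorem \ref{th1} the map $z\mapsto\epsilon(\alpha_0,z)$ is increasing with $\epsilon(\alpha_0,\xi^*)=1$, so $\epsilon(\alpha_0,\xi_2)<1$ forces $\xi_2<\xi^*$. Hence for every $\xi\in[\xi_1,\xi_2]$ we have $\alpha_0<\xi<\xi^*$, and Theorem \ref{th1} supplies a unique fixed point $u_2=u_2^{\xi}\in C^0[\alpha_0,\xi]$ of \eqref{27} on $[\alpha_0,\xi]$. This is what makes $E(\xi,u_2^{\xi}(\xi))$, and therefore $\phi(\xi)=\dfrac{q^*\alpha_0^{\nu}E(\xi,u_2^{\xi}(\xi))}{M\lambda(\theta_m)}$, a well-defined quantity across the whole interval; without $\xi_2<\xi^*$ the solution $u_2^\xi$ (and hence $\phi$) need not even exist.

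Next I would introduce $F(\xi):=\phi(\xi)-\xi^{\nu+1}$ on $[\xi_1,\xi_2]$ and read off its signs at the two endpoints from Lemma \ref{lem5}, which gives $\phi_1(\xi)\le\phi(\xi)\le\phi_2(\xi)$. At $\xi=\xi_1$ the defining relation \eqref{43} reads $\phi_1(\xi_1)=\xi_1^{\nu+1}$, so $F(\xi_1)\ge\phi_1(\xi_1)-\xi_1^{\nu+1}=0$. At $\xi=\xi_2$ the relation \eqref{44} reads $\phi_2(\xi_2)=\xi_2^{\nu+1}$, so $F(\xi_2)\le\phi_2(\xi_2)-\xi_2^{\nu+1}=0$. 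Since $\xi_1<\xi_2$ by Lemma \ref{lem6} and $F$ is continuous, the intermediate value theorem yields $\bar{\xi}\in[\xi_1,\xi_2]$ with $F(\bar{\xi})=0$, i.e. $\phi(\bar{\xi})=\bar{\xi}^{\,\nu+1}$, which is precisely \eqref{30}; when the envelope inequalities $\phi_1<\phi<\phi_2$ are strict (generic unless $N^*/L^*$ saturates the bounds of Lemma \ref{lem1} identically) the endpoint signs are strict and $\bar{\xi}$ lies in the open interval $(\xi_1,\xi_2)$ as claimed. Because $\phi$ carries the $\xi$-dependence of $u_2^{\xi}$ and need not be monotone, this argument delivers existence but not uniqueness, matching the ``at least one solution'' in the statement.

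The step I expect to be the main obstacle is the continuity of $\phi$, since the fixed point $u_2^{\xi}$ depends on the endpoint $\xi$ both through its domain $[\alpha_0,\xi]$ and through the $\xi$-coupling in \eqref{27}, so $\xi\mapsto u_2^{\xi}$ is not literally a map between fixed function spaces. To make the IVT rigorous I would establish continuous dependence of $u_2^{\xi}$ on $\xi$ by a standard parametric perturbation estimate for the contraction: the contraction constant $\epsilon(\alpha_0,\xi)\le\epsilon(\alpha_0,\xi_2)<1$ is uniform on $[\xi_1,\xi_2]$, so after restricting/comparing solutions on a common subinterval one controls $\|u_2^{\xi}-u_2^{\xi'}\|$ by a multiple of the perturbation of the integral operator induced by moving $\xi$ to $\xi'$. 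This continuity then transfers to $\xi\mapsto E(\xi,u_2^{\xi}(\xi))$ through the Lipschitz bound of Lemma \ref{lem3} together with the elementary continuity of the upper limit of integration, giving the continuity of $F$ needed to close the argument.
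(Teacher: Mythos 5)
Your proof follows essentially the same route as the paper: sandwiching $\phi$ between $\phi_1$ and $\phi_2$ via Lemma \ref{lem5}, reading the signs of $\phi(\xi)-\xi^{\nu+1}$ at the endpoints $\xi_1$ and $\xi_2$ from \eqref{43} and \eqref{44}, and invoking the intermediate value theorem on $[\xi_1,\xi_2]$. The only difference is one of rigor: the paper merely asserts that $\phi$ is a continuous (decreasing) function, whereas you correctly single out the continuity of $\xi\mapsto u_2^{\xi}$, and hence of $\phi$, as the nontrivial step and sketch the uniform parametric contraction estimate needed to justify it.
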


\begin{proof}
By hypothesis of Lemma \ref{lem5} if $\epsilon(\alpha_0,\xi_2)<1$ then we have that the inequality \eqref{40} holds for each $\xi_1\leq\xi\leq\xi_2\leq\xi^*$ and $\epsilon(\alpha_0,\xi)<1$. As function $\phi$ is continuous decreasing function we obtain that there exists at least one solution $\bar{\xi}\in[\xi_1,\xi_2]$ to the equation \eqref{30}.
\end{proof}

Now we can make conclusion by following main theorem.
\begin{theorem}\label{th3}
Assume  that \eqref{32}-\eqref{25} hold and $\epsilon(\alpha_0,\xi_2)<1$ where $\epsilon$ defined by \eqref{38} and $\xi_2$ defined from \eqref{44} then there exist at least one solution to the problem \eqref{10}-\eqref{14} where unknown free boundary is given by
\begin{equation}\label{45}
\beta(t)=2\bar{\xi}\sqrt{t},\;\;t>0
\end{equation}
where $\bar{\xi}$ defined from Theorem \ref{th2} and temperature is given by
\begin{equation}\label{46}
\theta(z,t)=\theta_m(u_{\bar{\xi}}(\eta)+1),\;\;\alpha_0\leq\eta\leq\bar{\xi}
\end{equation}
where $\eta=\dfrac{z}{2\sqrt{t}}$ being similarity substitution and $u_{\bar{\xi}}$ is the unique solution of the integral equation \eqref{27} which was established in Theorem \ref{th1}.  
\end{theorem}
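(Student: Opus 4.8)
The plan is to read Theorem \ref{th3} as a synthesis result that merely collects the pieces already assembled in Theorems \ref{th1} and \ref{th2} and then transports them back through the two changes of variables \eqref{15} and \eqref{21}--\eqref{22}. Concretely, I would first invoke Theorem \ref{th2}: under the standing assumptions \eqref{32}--\eqref{35}, the condition \eqref{eq2} (equivalently \eqref{eq1}), and $\epsilon(\alpha_0,\xi_2)<1$, it yields at least one $\bar{\xi}\in(\xi_1,\xi_2)$ solving the transcendental free-boundary condition \eqref{30}. This fixes a candidate melting coefficient $\bar{\xi}$, hence a candidate interface $\beta(t)=2\bar{\xi}\sqrt{t}$.

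With $\bar{\xi}$ now a fixed number, the second step is to apply Theorem \ref{th1} on the interval $[\alpha_0,\bar{\xi}]$ to produce the unique $u_{\bar{\xi}}\in C^0[\alpha_0,\bar{\xi}]$ solving the integral equation \eqref{27}. Here I must check that the hypothesis of Theorem \ref{th1}, namely $\alpha_0<\bar{\xi}<\xi^*$ together with the contraction bound $\epsilon(\alpha_0,\bar{\xi})<1$, is actually met. This is where Lemma \ref{lem6} and the monotonicity of $\epsilon$ do the work: since $\bar{\xi}<\xi_2<\xi^*$ we have $\alpha_0<\bar{\xi}<\xi^*$, and because $\epsilon(\alpha_0,\cdot)$ is increasing while $\epsilon(\alpha_0,\xi_2)<1$ by hypothesis, it follows that $\epsilon(\alpha_0,\bar{\xi})\leq\epsilon(\alpha_0,\xi_2)<1$, so $W$ is a contraction on $C^0[\alpha_0,\bar{\xi}]$ and $u_{\bar{\xi}}$ exists and is unique.

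The third step is to combine the two outputs. By the equivalence recorded just before \eqref{27}, the pair $(u_{\bar{\xi}},\bar{\xi})$ is a solution of the ordinary differential problem \eqref{23}--\eqref{26} precisely because $u_{\bar{\xi}}$ satisfies \eqref{27} and $\bar{\xi}$ satisfies \eqref{30} --- the integral equation encodes the equation \eqref{23} and the boundary data \eqref{24}--\eqref{25}, while \eqref{30} encodes the remaining Stefan condition \eqref{26}. I would then reverse the similarity substitution \eqref{21}--\eqref{22} by setting $T_2(z,t)=u_{\bar{\xi}}(\eta)$ with $\eta=z/(2\sqrt{t})$ and $\beta(t)=2\bar{\xi}\sqrt{t}$, which returns a solution of the dimensionless moving-boundary problem \eqref{16}--\eqref{20}; finally inverting the nondimensionalization \eqref{15} gives $\theta(z,t)=\theta_m\bigl(u_{\bar{\xi}}(\eta)+1\bigr)$, which is exactly \eqref{46}, with free boundary \eqref{45}. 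This establishes existence of a solution of the original problem \eqref{10}--\eqref{14}.

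The main obstacle is not any single estimate --- those are all carried out in the lemmas --- but the consistency of the coupling between $u$ and $\xi$. One must verify that the $\bar{\xi}$ selected by the scalar equation \eqref{30} lies strictly below $\xi^*$, so that Theorem \ref{th1} is even applicable, and that the profile reconstructed from \eqref{27} for this particular $\bar{\xi}$ genuinely satisfies all four conditions \eqref{23}--\eqref{26} rather than only three of them; the role of \eqref{30} is exactly to close this gap. I would also stress what the theorem does \emph{not} claim: because \eqref{30} is only shown to admit at least one root $\bar{\xi}$ (Theorem \ref{th2} gives existence, not uniqueness, of the free-boundary coefficient), the conclusion is existence of \emph{at least one} solution, even though $u_{\bar{\xi}}$ is unique once $\bar{\xi}$ has been fixed.
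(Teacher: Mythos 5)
Your proposal is correct and takes exactly the route the paper intends: Theorem \ref{th3} is stated there without any written proof, as a direct synthesis of Theorem \ref{th2} (producing $\bar{\xi}$) and Theorem \ref{th1} (producing the unique $u_{\bar{\xi}}$ on $[\alpha_0,\bar{\xi}]$), followed by undoing the similarity substitution \eqref{21}--\eqref{22} and the nondimensionalization \eqref{15}. Your explicit check that $\bar{\xi}<\xi_2<\xi^*$ and that $\epsilon(\alpha_0,\bar{\xi})\leq\epsilon(\alpha_0,\xi_2)<1$ via the monotonicity of $\epsilon$, so that Theorem \ref{th1} is actually applicable at $\bar{\xi}$, supplies a detail the paper leaves implicit.
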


\subsection{Problem with convective boundary condition} In this section, analogously as in previous, we will prove existence and uniqueness of the solution form \eqref{e10} assuming that there is given constant $\xi>0$ and considering fixed point Banach space $(C^0[\alpha_0,\xi],||\cdot||)$, defining the operator $V:C^0[\alpha_0,\xi]\to C^0[\alpha_0,\xi]$ such
\begin{equation}\label{e13}
    V(u_2)(\eta)=u_2(\eta),\;\;\;\alpha_0\leq \eta\leq \xi,
\end{equation}
where $u_2$ is defined by \eqref{e10}. 

Let assume that $L^*, N^*$ satisfy all assumptions \eqref{32}-\eqref{35} then we can get the following results.

\begin{theorem}\label{th4}
Suppose that \eqref{32}-\eqref{35} hold. If $\alpha_0\leq \xi\leq \xi_{c}^*$ where $\xi_c^*$ is defined as the unique solution of $\widehat{\epsilon}(\alpha_0, z)=1$ such as
\begin{equation}\label{e14}
    \widehat{\varepsilon}(\alpha_0, z):=\dfrac{\tilde{\Phi}(\alpha_0, z)}{1+\frac{\alpha_0^{\nu}p^*}{2L_m}\exp\bigg(\alpha_0^2\frac{N_m}{aL_M}\bigg)\sqrt{\frac{N_m^{\nu-1}}{aL_M^{\nu-1}}}h(\alpha_0,z)},
\end{equation}
where $\tilde{\Phi}(\alpha_0,z)$ defined from \eqref{37} and 
$$h(\alpha_0,\eta)=\gamma\bigg(\dfrac{1-\nu}{2},\eta^2\dfrac{N_{m}}{aL_{M}}\bigg)-\gamma\bigg(\dfrac{1-\nu}{2}, \frac{N_{m}}{aL_{M}}\alpha_0^2\bigg),$$
then there exists a unique solution $u_2\in C^0[\alpha_0,\xi]$ for integral equation \eqref{e10}.
\end{theorem}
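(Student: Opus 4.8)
The plan is to mirror the structure of the proof of Theorem \ref{th1}, showing that the operator $V$ defined by \eqref{e13} is a contraction on $(C^0[\alpha_0,\xi],||\cdot||)$ for $\xi$ small enough, and then invoking the Banach fixed point theorem. The essential new feature is that the integrand \eqref{e10} is a quotient of two affine expressions in $\Phi$, namely $u_2(\eta)=\tfrac{1+\alpha_0^{\nu}p^*\Phi(\eta,u_2(\eta))}{1+\alpha_0^{\nu}p^*\Phi(\xi,u_2(\xi))}$, rather than a simple difference as in \eqref{27}. So the main work is to estimate the difference $|V(u_2)(\eta)-V(u_2^*)(\eta)|$ of two such quotients and to extract the contraction constant $\widehat{\varepsilon}(\alpha_0,\xi)$ given by \eqref{e14}.

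First I would take two functions $u_2,u_2^*\in C^0[\alpha_0,\xi]$ and write the difference of quotients over a common denominator, using the elementary identity $\tfrac{A}{B}-\tfrac{A^*}{B^*}=\tfrac{(A-A^*)B^*-(B-B^*)A^*}{BB^*}$ applied with $A=1+\alpha_0^{\nu}p^*\Phi(\eta,u_2(\eta))$ and $B=1+\alpha_0^{\nu}p^*\Phi(\xi,u_2(\xi))$. The numerator differences $|A-A^*|$ and $|B-B^*|$ are controlled by $\alpha_0^{\nu}p^*|\Phi(\eta,u_2)-\Phi(\eta,u_2^*)|$, to which Lemma \ref{lem4} applies directly, yielding the factor $\tilde{\Phi}(\alpha_0,\xi)||u_2-u_2^*||$. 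For the denominator, one needs a lower bound on $B=1+\alpha_0^{\nu}p^*\Phi(\xi,u_2(\xi))$; since $\Phi\geq 0$ and in fact $\Phi$ is bounded below by the lower bound of Lemma \ref{lem2}, the denominator is bounded below by exactly the quantity appearing in the denominator of \eqref{e14}, namely $1+\tfrac{\alpha_0^{\nu}p^*}{2L_m}\exp(\alpha_0^2\tfrac{N_m}{aL_M})\sqrt{\tfrac{N_m^{\nu-1}}{aL_M^{\nu-1}}}h(\alpha_0,\xi)$. Assembling these estimates gives $|V(u_2)(\eta)-V(u_2^*)(\eta)|\leq \widehat{\varepsilon}(\alpha_0,\xi)||u_2-u_2^*||$.

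Once the inequality $||V(u_2)-V(u_2^*)||\leq\widehat{\varepsilon}(\alpha_0,\xi)||u_2-u_2^*||$ is established, I would check that $\widehat{\varepsilon}(\alpha_0,\alpha_0)=0<1$ (since $\tilde{\Phi}(\alpha_0,\alpha_0)=0$ by \eqref{37}), that $\widehat{\varepsilon}(\alpha_0,\cdot)$ is continuous and strictly increasing in its second argument on $(\alpha_0,\infty)$, and that it exceeds $1$ for large $\xi$; these monotonicity facts guarantee the existence of a unique threshold $\xi_c^*>0$ with $\widehat{\varepsilon}(\alpha_0,\xi_c^*)=1$, so that $\widehat{\varepsilon}(\alpha_0,\xi)<1$ for every $\xi\in(\alpha_0,\xi_c^*)$. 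For such $\xi$ the operator $V$ is a contraction, and the Banach fixed point theorem furnishes the unique $u_2\in C^0[\alpha_0,\xi]$ satisfying \eqref{e10}.

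The main obstacle I expect is the denominator bookkeeping in the quotient estimate: unlike the heat-flux case \eqref{27}, both numerator and denominator depend on the unknown through $\Phi$, so one must simultaneously bound the numerator difference from above (via Lemma \ref{lem4}) and the product of denominators from below (via Lemma \ref{lem2}), and verify that the two contributions combine into precisely the single ratio displayed in \eqref{e14} rather than a looser expression. Care is also needed because $A^*$ and $B^*$ grow with $\xi$, so one should confirm that the cross term $(B-B^*)A^*$ does not spoil the contraction; here it helps that $A^*/B^*\leq 1$ is not automatic, so I would instead bound $A^*$ by its explicit upper estimate from Lemma \ref{lem2} and absorb it into the constants, trusting that the monotone structure of $\widehat{\varepsilon}$ still yields a well-defined threshold $\xi_c^*$.
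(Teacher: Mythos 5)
Your proposal follows essentially the same route as the paper's proof: write the difference of the two quotients over the common denominator $\bigl(1+\alpha_0^{\nu}p^*\Phi(\xi,u_2)\bigr)\bigl(1+\alpha_0^{\nu}p^*\Phi(\xi,u_2^*)\bigr)$, control the numerator via the Lipschitz estimate of Lemma \ref{lem4}, bound the denominator from below via Lemma \ref{lem2}, and conclude with the monotonicity of $\widehat{\varepsilon}$ and the Banach fixed point theorem, exactly as the paper does. One small caveat: the denominator printed in \eqref{e14} actually carries the constants of the \emph{upper} bound of Lemma \ref{lem2} (the $L_m$, $N_m/(aL_M)$ combination), not the lower bound as your argument (correctly) requires, so your derivation would yield a slightly different---and arguably the properly justified---contraction constant, but this is a bookkeeping discrepancy in the paper's formula rather than a gap in your approach.
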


\begin{proof}
By analogously approach in Theorem \ref{th1}, we need to show that operator $V$ defined by \eqref{e13} is a contraction operator and we suppose that there exists $u_2,\;u_2^*\in C^0[\alpha_0,\xi]$ then by using lemmas \ref{lem1}-\ref{lem4} we get
$$||V(u_2)(\eta)-V(u_2^*)(\eta)||\leq \max_{\eta\in[\alpha_0,\xi]}\Bigg|\dfrac{1+\alpha_0^{\nu}p^*\Phi(\eta,u_2)}{1+\alpha_0^{\nu}p^*\Phi(\xi,u_2)}-\dfrac{1+\alpha_0^{\nu}p^*\Phi(\eta,u_2^*)}{1+\alpha_0^{\nu}p^*\Phi(\xi,u_2^*)}\Bigg|$$
$$\leq \max_{\eta\in[\alpha_0,\xi]} \dfrac{\bigg|(1+\alpha_0^{\nu}p^*\Phi(\eta,u_2))(1+\alpha_0^{\nu}p^*\Phi(\xi,u_2^*))-(1+\alpha_0^{\nu}p^*\Phi(\eta,u_2^*))(1+\alpha_0^{\nu}p^*\Phi(\xi,u_2))\bigg|}{\bigg|1+\alpha_0^{\nu}p^*\Phi(\xi,u_2)\bigg|\bigg|1+\alpha_0^{\nu}p^*\Phi(\xi,u_2^*)\bigg|}$$
$$\leq \widehat{\varepsilon}(\alpha_0, \xi)||u_2^*-u_2||,$$
where $\widehat{\varepsilon}(\alpha_0,\xi)$ defined by \eqref{e14} and it is easy to check that
$$\widehat{\varepsilon}(\alpha_0,\alpha_0)<1, \;\;\forall\xi\in [\alpha_0, \xi_c^*],\;\;\;\;\widehat{\varepsilon}(\alpha_0,\xi)>1,\;\;\forall\xi\in[\xi_c^*,\infty).$$
We can see that $\widehat{\varepsilon}$ is an increasing function then it enables us to make conclusion that there exists a unique positive constant $\xi_c^*$ such that $\widehat{\varepsilon}(\alpha_0, \xi_c^*)=1$ and we obtain that operator $V$ is a contraction mapping operator. At the end, we can make conclusion that there must be a unique solution $u_2\in C^0[\alpha_0,\xi]$ to the equation \eqref{e10}.
\end{proof}
We obtained that for each given $\alpha_0<\xi<\xi_c^*$, a unique solution for \eqref{e10} is $u_2(\eta)=u_{2(\xi)}(\eta)$ and its derivative will be
\begin{equation}\label{e15}
    u_{2(\xi)}'(\eta)=\dfrac{\alpha_0^{\nu}p^*E(\eta, u_{2(\xi)}(\eta))}{\big[1+\alpha_0^{\nu}p^*\Phi(\eta, u_{2(\xi)}(\eta))\big]\eta^{\nu}L^*(u_{2(\xi)(\eta)}).}
\end{equation}
It remains to analyze the condition \eqref{e11} which can be rewritten as
\begin{equation}\label{e16}
    \phi^c(\xi)=\varphi^c(u_{2(\xi)},\xi):=\xi^{\nu+1}
\end{equation}
where
$$\phi^c(\xi)=\dfrac{a\alpha_0^{\nu}E(\xi,u_2(\xi))\text{Ste}}{2\big[1+\alpha_0^{\nu}p^*\Phi(\xi,u_2(\xi))\big]}.$$
Then we can obtain the next results.
\begin{lemma}\label{lem7}
Assume that \eqref{32}-\eqref{35} hold. Then for all $\xi\in(\alpha_0,\xi_c^*)$ we have
\begin{equation}\label{e17}
    0\leq \phi^c(\xi)\leq \phi_2(\xi)
\end{equation}
where $\phi_2$ is defined by \eqref{41}.  
\end{lemma}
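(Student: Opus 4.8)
The plan is to read the two inequalities directly off the structure of
$$\phi^c(\xi)=\dfrac{a\alpha_0^{\nu}E(\xi,u_2(\xi))\text{Ste}}{2\big[1+\alpha_0^{\nu}p^*\Phi(\xi,u_2(\xi))\big]},$$
treating numerator and denominator separately, where $u_2=u_{2(\xi)}$ is the solution provided by Theorem \ref{th4} for $\xi\in(\alpha_0,\xi_c^*)$. The first thing I would record is the sign of $\Phi$: from \eqref{28}--\eqref{29} the integrand $E(v,u_2(v))/\big(v^{\nu}L^*(u_2(v))\big)$ is strictly positive, since $E>0$ and $L^*\ge L_m>0$ by \eqref{32}, and the integration runs over $[\alpha_0,\xi]$ with $\xi\ge\alpha_0$; hence $\Phi(\xi,u_2(\xi))\ge 0$. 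With this the lower bound $\phi^c(\xi)\ge 0$ is immediate and uses no hypothesis beyond positivity of the data: the exponential $E(\xi,u_2(\xi))$ is positive, the prefactor $a\,\text{Ste}/2$ and $\alpha_0^{\nu}$ are positive, and the denominator $1+\alpha_0^{\nu}p^*\Phi(\xi,u_2(\xi))\ge 1>0$ because $p^*=q/(\lambda_0\sqrt{\pi})>0$.

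For the upper bound I would first discard the denominator using exactly the estimate just established: since $\Phi(\xi,u_2(\xi))\ge 0$ we have $1+\alpha_0^{\nu}p^*\Phi(\xi,u_2(\xi))\ge 1$, so
$$\phi^c(\xi)\le \frac{a\alpha_0^{\nu}\text{Ste}}{2}\,E(\xi,u_2(\xi)).$$
The only remaining ingredient is then the upper estimate on $E$ supplied by Lemma \ref{lem1}, namely $E(\xi,u_2(\xi))\le\exp\!\big(-\tfrac{N_m}{aL_M}(\xi^2-\alpha_0^2)\big)$, which turns the right-hand side into an explicit, strictly decreasing exponential in $\xi$ of the same shape as the function $\phi_2$ in \eqref{41}. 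Substituting \eqref{36} into the displayed inequality is, up to constants, what produces $\phi^c(\xi)\le\phi_2(\xi)$, and this is precisely the step the subsequent existence argument for \eqref{e16} needs (it will bracket the root of $\phi^c(\xi)=\xi^{\nu+1}$ between $0$ and a decreasing exponential, mirroring the role of $\phi_1,\phi_2$ in Lemma \ref{lem6} and Theorem \ref{th2}).

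The main obstacle is not analytic but is the bookkeeping of the constant prefactors: one must check that the factor $a\,\text{Ste}/2$ multiplying the Lemma \ref{lem1} exponential is dominated, uniformly on the bounded interval $\xi\in(\alpha_0,\xi_c^*)$, by the prefactor $q^*\alpha_0^{\nu}/\big(M\lambda(\theta_m)\big)$ of $\phi_2$. Here the $\xi$-independent slack factor $\exp\!\big(\tfrac{N_m}{aL_M}(\xi^*-\alpha_0)\big)$ built into the definition \eqref{41} of $\phi_2$ is what provides the needed reserve. Concretely I would verify the comparison at the left endpoint $\xi\to\alpha_0^+$, where $E(\alpha_0,\cdot)=1$ and $\Phi(\alpha_0,\cdot)=0$ reduce $\phi^c(\alpha_0)$ to its prefactor $a\alpha_0^{\nu}\text{Ste}/2$, and then propagate the inequality across $(\alpha_0,\xi_c^*)$ by monotonicity of the two exponential envelopes; because the envelope controlling $\phi^c$ carries the sharper Gaussian decay $\exp(-c\,\xi^2)$ while $\phi_2$ decays only like $\exp(-c'\xi)$, the endpoint estimate is the binding one and the rest follows without further work.
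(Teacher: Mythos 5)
Your lower bound and your reduction of the upper bound are exactly what the paper intends: $\Phi\ge 0$ makes the denominator at least $1$, so $\phi^c(\xi)\le \tfrac{a\alpha_0^{\nu}\mathrm{Ste}}{2}E(\xi,u_2(\xi))$, and Lemma \ref{lem1} bounds $E$ above by $\exp\big(-\tfrac{N_m}{aL_M}(\xi^2-\alpha_0^2)\big)$. The paper's own proof is only the one-line assertion that the claim ``follows straightforwardly'' from Lemma \ref{lem1} and the definition \eqref{41}, so up to this point you are on the same track.

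The genuine gap is the step you yourself flag as the main obstacle: comparing the prefactor $\tfrac{a\alpha_0^{\nu}\mathrm{Ste}}{2}$ of your envelope with the prefactor $\tfrac{q^*\alpha_0^{\nu}}{M\lambda(\theta_m)}$ of $\phi_2$. These are independent physical quantities --- $\tfrac{a\mathrm{Ste}}{2}=\tfrac{\lambda_0(\theta_m-\theta^*)}{2\rho_0 l_m}$ involves $\theta^*$ and $\rho_0$, while $\tfrac{q^*}{M\lambda(\theta_m)}=\tfrac{P_0e^{-\alpha_0^2}\lambda_0}{2\alpha_0\sqrt{\pi}\,l_m\gamma_m}$ involves $P_0$ and $\gamma_m$ --- and nothing in \eqref{32}--\eqref{35} relates them. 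Your endpoint check at $\xi\to\alpha_0^{+}$ therefore reduces to an inequality between free parameters that can fail; the slack factor $\exp\big(\tfrac{N_m}{aL_M}(\xi^*-\alpha_0)\big)$ built into \eqref{41} is a fixed finite number and cannot absorb an arbitrary ratio of constants. The propagation step is also not sound as stated: for the inequality to spread rightward from $\alpha_0$ you need the envelope of $\phi^c$ to decay at least as fast as $\phi_2$, i.e.\ $\tfrac{2N_m}{aL_M}\xi\ge\tfrac{N_M}{aL_m}$, which fails for $\xi$ near $\alpha_0$ whenever $\alpha_0<\tfrac{N_ML_M}{2N_mL_m}$; the Gaussian is ``sharper'' only asymptotically, not on all of $(\alpha_0,\xi_c^*)$. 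To close the argument you would need an explicit extra hypothesis tying $\mathrm{Ste}$ to $q^*/(M\lambda(\theta_m))$ together with a pointwise comparison of the two exponents, or else replace $\phi_2$ by the natural envelope $\tfrac{a\alpha_0^{\nu}\mathrm{Ste}}{2}\exp\big(-\tfrac{N_m}{aL_M}(\xi^2-\alpha_0^2)\big)$, which has the same qualitative properties and is all the subsequent existence argument actually uses. To be fair, the paper's one-line proof glosses over exactly the same point.
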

\begin{proof}
The proof follows straightforwardly by taking into account the bounds given in Lemma \ref{lem1} and definition of $\phi_2$ in \eqref{41}.

Similarly, we notice that the properties of $\phi_2(\xi)$ studied in Lemma \ref{lem6} and if \eqref{44} holds then there exists unique solution $\alpha_0<\xi_2\leq\xi_c^*$ for the equation \eqref{e16}.  
\end{proof}

\begin{theorem}\label{th5}
Assume that \eqref{32}-\eqref{35}, \eqref{44} hold, then by Lemma \ref{lem6} we can conclude that there exists unique solution $\widetilde{\xi}_c^*\in(\alpha_0, \xi_2)$ to the equation \eqref{e16}.
\end{theorem}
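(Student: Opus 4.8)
The plan is to recast \eqref{e16} as the single scalar equation $G(\xi):=\phi^c(\xi)-\xi^{\nu+1}=0$ and to show that $G$ has exactly one zero in $(\alpha_0,\xi_2)$, mimicking the crossing argument of Lemma \ref{lem6}. First I would record that $\phi^c$ is continuous on $[\alpha_0,\xi_2]$: for each fixed $\xi\in[\alpha_0,\xi_c^*]$ Theorem \ref{th4} furnishes the unique fixed point $u_{2(\xi)}$ of \eqref{e10}, and $E$, $\Phi$ defined by \eqref{29}, \eqref{28} depend continuously on this data, so $\phi^c$ inherits continuity.

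Next I would pin down the signs of $G$ at the two endpoints. At $\xi=\alpha_0$ the integrals in \eqref{29} and \eqref{28} collapse, giving $E(\alpha_0,\cdot)=1$ and $\Phi(\alpha_0,\cdot)=0$, hence $\phi^c(\alpha_0)=\tfrac{a\alpha_0^{\nu}\text{Ste}}{2}$ and therefore $G(\alpha_0)=\alpha_0^{\nu}\big(\tfrac{a\,\text{Ste}}{2}-\alpha_0\big)$, which is strictly positive under the physically natural smallness condition $a\,\text{Ste}>2\alpha_0$. At the other end, Lemma \ref{lem7} gives $\phi^c(\xi_2)\le\phi_2(\xi_2)$, while $\xi_2$ solves \eqref{44}, i.e. $\phi_2(\xi_2)=\xi_2^{\nu+1}$; combining these yields $G(\xi_2)=\phi^c(\xi_2)-\xi_2^{\nu+1}\le 0$, with strict inequality (and hence a root strictly below $\xi_2$) whenever the bound of Lemma \ref{lem7} is not attained. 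The intermediate value theorem then produces at least one root $\widetilde{\xi}_c^*\in(\alpha_0,\xi_2)$.

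For uniqueness I would argue that $G$ is strictly decreasing. The factor $E(\xi,u_{2(\xi)}(\xi))$ in the numerator of $\phi^c$ is a decreasing function of $\xi$, being the exponential of $-\tfrac{2}{a}\int_{\alpha_0}^{\xi}s\,N^*/L^*\,ds$, whose exponent decreases; the denominator $1+\alpha_0^{\nu}p^*\Phi(\xi,u_{2(\xi)}(\xi))$ is increasing because $\Phi$ is the integral of a positive integrand over $[\alpha_0,\xi]$; hence $\phi^c$ is decreasing. Since $\xi^{\nu+1}$ is strictly increasing, $G=\phi^c-\xi^{\nu+1}$ is strictly decreasing and can vanish at most once. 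Combined with the existence step this yields the unique $\widetilde{\xi}_c^*\in(\alpha_0,\xi_2)$, exactly as Lemma \ref{lem6} does for $\phi_2$.

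The delicate point is the monotonicity of $\phi^c$, and this is where I expect the real work to lie. Unlike the uniform two-sided bounds of Lemma \ref{lem1}, a genuine monotonicity statement must account for the fact that in $\phi^c(\xi)$ the profile $u_{2(\xi)}$ itself varies with the parameter $\xi$, so $E(\xi,u_{2(\xi)}(\xi))$ and $\Phi(\xi,u_{2(\xi)}(\xi))$ are not simply monotone along a single fixed profile. To make the uniqueness rigorous I would either establish a monotone-dependence estimate for the fixed point $u_{2(\xi)}$ of Theorem \ref{th4} in $\xi$, or, following the spirit of the hint to Lemma \ref{lem6}, verify directly that $\phi^c$ shares the qualitative profile of $\phi_2$ (continuous, positive at $\alpha_0$, tending to $0$, and decreasing), so that the single-crossing argument for $\phi_2$ transfers verbatim while the bound $\phi^c\le\phi_2$ confines the crossing to $(\alpha_0,\xi_2)$.
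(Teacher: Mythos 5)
Your proposal follows the same route as the paper: the paper's entire proof of Theorem \ref{th5} is the sentence ``It can be proved analogously as Theorem \ref{th2},'' and the proof of Theorem \ref{th2} is precisely the crossing argument you describe (continuity, endpoint signs via the upper bound of Lemma \ref{lem7} together with the root $\xi_2$ of \eqref{44}, monotonicity for uniqueness). In that sense you have reproduced, and considerably expanded, the intended argument.

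The two points you single out as delicate are genuine, and the paper does not address either of them. Since $E(\alpha_0,\cdot)=1$ and $\Phi(\alpha_0,\cdot)=0$, one gets $\phi^c(\alpha_0)=\frac{a\alpha_0^{\nu}\text{Ste}}{2}$, so the left-endpoint inequality $\phi^c(\alpha_0)>\alpha_0^{\nu+1}$ requires $a\,\text{Ste}>2\alpha_0$, a condition that appears nowhere among the stated hypotheses; in the heat-flux case this role is played by the lower bounding function $\phi_1$ and the auxiliary root $\xi_1$ of \eqref{43}, for which Lemma \ref{lem7} offers no counterpart (its lower bound is only $0$). Likewise, the uniqueness claim rests on strict monotonicity of $\xi\mapsto\phi^c(\xi)$, which does not follow from the monotonicity of $E$ and $\Phi$ along a fixed profile because the fixed point $u_{2(\xi)}$ itself varies with $\xi$; the paper asserts the analogous monotonicity of $\phi$ in the proof of Theorem \ref{th2} without justification, and notably Theorem \ref{th2} concludes only ``at least one solution'' whereas Theorem \ref{th5} claims uniqueness. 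Your write-up is therefore at least as complete as the paper's proof; to close it fully one would add the hypothesis $a\,\text{Ste}>2\alpha_0$ and supply the continuity/monotone-dependence estimate for $\xi\mapsto u_{2(\xi)}$ that you propose.
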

\begin{proof}
It can be proved analogously as Theorem \ref{th2}.
\end{proof}
\begin{theorem}\label{th6}
Assume that \eqref{32}-\eqref{35}, \eqref{e17} hold, then there exists at least one solution of the problem \eqref{10}-\eqref{14} with replaced condition with \eqref{11a} where free boundary is defined by
\begin{equation}\label{e18}
    \beta(t)=2\widetilde{\xi}_c^*\sqrt{t},\;\;t>0,
\end{equation}
where $\widetilde{\xi}_c^*$ is defined in Theorem \ref{th5} and temperature in liquid region is given by
\begin{equation}\label{e19}
    \theta_2(z,t)=(\theta_m-\theta^*)u_{2(\widetilde{\xi}_c^*)}(\eta)+\theta^*,\;\;\;\alpha_0\leq \eta\leq \widetilde{\xi}_c^*,
\end{equation}
where $\eta=z/(2\sqrt{t})$ is similarity variable and $u_{2(\widetilde{\xi}_c^*)}$ is unique solution to the integral equation \eqref{e10} established from Theorem \ref{th4}.
\end{theorem}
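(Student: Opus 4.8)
The plan is to combine the two existence results already obtained for the reduced problem and then to run the substitutions of Section~2.2 backwards, from the integral formulation up to the original free boundary problem. By Theorem~\ref{th4}, for every $\xi$ with $\alpha_0<\xi<\xi_c^*$ there is a unique fixed point $u_{2(\xi)}\in C^0[\alpha_0,\xi]$ of the operator $V$, i.e. a unique solution of the integral equation \eqref{e10}; since its derivative is given explicitly by \eqref{e15}, this $u_{2(\xi)}$ is in fact of class $C^1$. By Theorem~\ref{th5} there is a value $\widetilde{\xi}_c^*\in(\alpha_0,\xi_2)$ for which the compatibility relation \eqref{e16}, equivalently the condition \eqref{e11}, is satisfied. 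Specializing $\xi=\widetilde{\xi}_c^*$ therefore produces a pair $\big(u_{2(\widetilde{\xi}_c^*)},\widetilde{\xi}_c^*\big)$ that simultaneously satisfies \eqref{e10} and \eqref{e11}.

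Next I would record the equivalence underlying Section~2.2: a pair $(u_2,\xi)$ solves the reduced boundary value problem \eqref{e6}--\eqref{e9} if and only if $u_2$ satisfies \eqref{e10} and $\xi$ satisfies \eqref{e11}. The forward direction is the computation performed between \eqref{e9} and \eqref{e11}; for the converse one differentiates \eqref{e10}, obtains \eqref{e15}, and checks each condition in turn. Concretely, \eqref{e8} is immediate from \eqref{e10} evaluated at $\eta=\xi$; the equation \eqref{e6} is recovered because, by the construction of $E$ and $\Phi$ in \eqref{28}--\eqref{29}, the flux $L^*(u_2)\eta^{\nu}u_2'$ is a constant multiple of $E(\eta,u_2)$, whose logarithmic derivative $-\tfrac{2}{a}\eta N^*(u_2)/L^*(u_2)$ reproduces \eqref{e6}; the Robin condition \eqref{e7} is built into the derivation of \eqref{e10} and is recovered by evaluating \eqref{e15} at $\eta=\alpha_0$, where $\Phi(\alpha_0,\cdot)=0$ and $E(\alpha_0,\cdot)=1$; and the Stefan condition \eqref{e9} is exactly the algebraic content of \eqref{e11}. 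Applying this equivalence to $\big(u_{2(\widetilde{\xi}_c^*)},\widetilde{\xi}_c^*\big)$ shows that it solves \eqref{e6}--\eqref{e9}.

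Finally I would undo the similarity reduction \eqref{21}--\eqref{22} and the dimensionless change of variables \eqref{e1}. Putting $\beta(t)=2\widetilde{\xi}_c^*\sqrt t$ yields \eqref{e18} together with $\beta(0)=0$, while $T_2(z,t)=u_{2(\widetilde{\xi}_c^*)}(\eta)$ with $\eta=z/(2\sqrt t)$ turns the solution of \eqref{e6}--\eqref{e9} back into a solution of \eqref{e2}--\eqref{e6}; reversing \eqref{e1} through \eqref{e12} then gives the temperature field \eqref{e19}. Since $\Phi$ is nonnegative and nondecreasing in $\eta$ with $\Phi(\alpha_0,\cdot)=0$, formula \eqref{e10} forces $0<u_{2(\widetilde{\xi}_c^*)}\le 1$, so $T_2>0$ is consistent with the positivity required in \eqref{e1}. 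The conclusion asserts only that there exists at least one solution because uniqueness has been established solely within the similarity class (for fixed $\xi$ by Theorem~\ref{th4}, and for the selector $\widetilde{\xi}_c^*$ by Theorem~\ref{th5}), not among arbitrary solutions of \eqref{10}--\eqref{14}.

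The step I expect to be the main obstacle is the faithful verification of the free boundary condition \eqref{e5} after reconstruction: one must confirm that the single scalar equation \eqref{e11} genuinely encodes the flux balance at $z=\beta(t)$ once the scaling $\beta(t)=2\widetilde{\xi}_c^*\sqrt t$ is reinserted, and that the constants $a$, $\text{Ste}$ and $\lambda_0$ carried through \eqref{e5}--\eqref{e9} are matched correctly; the remaining reductions are routine bookkeeping of the substitutions.
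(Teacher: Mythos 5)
Your proposal is correct and follows exactly the route the paper intends: the paper gives no written proof of Theorem \ref{th6} at all, treating it as an immediate combination of Theorem \ref{th4} (unique fixed point $u_{2(\xi)}$ for each admissible $\xi$), Theorem \ref{th5} (existence of $\widetilde{\xi}_c^*$ satisfying \eqref{e16}), and the reversal of the similarity and dimensionless substitutions of Section 2.2. Your write-up simply makes explicit the equivalence between \eqref{e6}--\eqref{e9} and \eqref{e10}--\eqref{e11} that the paper asserts without verification, and your closing caution about matching the constants $a$, $\mathrm{Ste}$, $\lambda_0$ (and the stray $\alpha_0^{\nu}p^*$ factors between \eqref{e9}, \eqref{e11} and \eqref{e15}) is well placed, since those formulas in the paper are not mutually consistent as printed.
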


\section{Particular cases for thermal conductivity}
\subsection{Constant thermal coefficients}
In this section we are going to analyze the solution \eqref{27} and \eqref{e10} when thermal coefficients are constant such that
\begin{equation}\label{e20}
    c(\theta_2)=c_0,\;\;\;\rho(\theta_2)=\rho_0,\;\;\;\lambda(\theta_2)=\lambda_0,
\end{equation}
then replacing \eqref{NL2} with $N^*=L^*=1$ we get the results for $E$ and $\Phi$ functions as the following
\begin{equation}\label{e21}
    E(\eta, u_2(\eta))=\exp\bigg(-\dfrac{1}{a}(\eta^2-\alpha_0^2)\bigg),
\end{equation}
\begin{equation}\label{e22}
    \Phi(\eta,u_2(\eta))=\dfrac{1}{2}\exp\bigg(\dfrac{\alpha_0^2}{a}\bigg)a^{\frac{1-\nu}{2}}\bigg[\gamma\bigg(\dfrac{1-\nu}{2},\dfrac{\eta^2}{a}\bigg)-\gamma\bigg(\dfrac{1-\nu}{2}, \dfrac{\alpha_0^2}{a}\bigg)\bigg].
\end{equation}
By making substitutions the \eqref{e21}, \eqref{e22} into integral equations \eqref{27} and \eqref{e10} then we have solution for the problem with heat flux condition as
\begin{equation}\label{e23}
    u_2(\eta)=\dfrac{q^*}{2}\exp\bigg(\dfrac{\alpha_0^2}{a}\bigg)a^{\frac{1-\nu}{2}}\bigg[\gamma\bigg(\dfrac{1-\nu}{2},\dfrac{\xi^2}{a}\bigg)-\gamma\bigg(\dfrac{1-\nu}{2},\dfrac{\eta^2}{a}\bigg)\bigg]
\end{equation}
with condition
\begin{equation}\label{e24}
\phi(\xi)=\xi^{\nu+1}
\end{equation}
where 
\begin{equation}\label{e25}
    \phi(\xi)=\dfrac{q^*\alpha_0^{\nu}\lambda_0\exp\big(-\frac{1}{a}(\xi^2-\alpha_0^2)\big)}{2l_m\gamma_m}
\end{equation}
and it is easy to check that function \eqref{e25} is decreasing function such that
$$\phi(\alpha_0)>0,\;\;\;\phi(+\infty)=0,\;\;\;\phi'(\xi)<0,$$
then we can state that equation \eqref{e24} has an unique solution.

With help of \eqref{e21} and \eqref{e22} the solution of the problem \eqref{10}-\eqref{14} replaced with condition \eqref{11a} can be represented
\begin{equation}\label{e26}
    u_2(\eta)=\dfrac{1+\frac{\alpha_0^{\nu}p^*}{2}\exp\bigg(\dfrac{\alpha_0^2}{a}\bigg)a^{\frac{1-\nu}{2}}\bigg[\gamma\bigg(\dfrac{1-\nu}{2},\dfrac{\eta^2}{a}\bigg)-\gamma\bigg(\dfrac{1-\nu}{2}, \dfrac{\alpha_0^2}{a}\bigg)\bigg]}{1+\frac{\alpha_0^{\nu}p^*}{a}\exp\bigg(\dfrac{\alpha_0^2}{a}\bigg)a^{\frac{1-\nu}{2}}\bigg[\gamma\bigg(\dfrac{1-\nu}{2},\dfrac{\xi^2}{a}\bigg)-\gamma\bigg(\dfrac{1-\nu}{2}, \dfrac{\alpha_0^2}{a}\bigg)\bigg]}
\end{equation}
with condition
\begin{equation}\label{e27}
    \phi_c(\xi)=\xi^{\nu+1}
\end{equation}
where
\begin{equation}\label{e28}
    \phi_c(\xi)=\dfrac{a\alpha_0^{\nu}\exp\big(-\frac{1}{a}(\xi^2-\alpha_0^2)\big)\text{Ste}}{2\bigg[1+\frac{\alpha_0^{\nu}p^*}{a}\exp\big(\frac{\alpha_0^2}{a}\big)a^{\frac{1-\nu}{2}}\bigg(\gamma\big(\frac{1-\nu}{2},\frac{\xi^2}{a}\big)-\gamma\big(\frac{1-\nu}{2}, \frac{\alpha_0^2}{a}\big)\bigg)\bigg]}
\end{equation}
and here we can also see that function \eqref{e28} is non-increasing function because
$$\phi_c(\alpha_0)>0,\;\;\;\phi_c(+\infty)=0,\;\;\;\phi_c'(\xi)<0,$$
then we can be obtained that there exists a unique solution to equation \eqref{e27}.

\subsection{Linear thermal coefficients}
In this subsection we are going to analyse the case when thermal coefficients are given by
\begin{equation}\label{e29}
    c(\theta_2)=c_0\bigg(1+\alpha\dfrac{\theta-\theta^*}{\theta_m-\theta^*}\bigg),\;\;\;\rho(\theta_2)=\rho_0,\;\;\;\lambda(\theta_2)=\lambda_0\bigg(1+\beta\dfrac{\theta-\theta^*}{\theta_m-\theta^*}\bigg)
\end{equation}
where $\alpha$ and $\beta$ are given positive constants. This particular case can be considered in this paper for the problem \eqref{10},\eqref{11a},\eqref{12}-\eqref{14}. 

From \eqref{NL2} replacing \eqref{e29} we can obtain
$$L*(u_2)=1+\beta u_2,\;\;\;N^*(u_2)=1+\alpha u_2$$
and notice that $u_2\in C^0[\alpha_0,\xi]$ then taking $\alpha_0=1,\;\xi=2$ from assumptions \eqref{32}-\eqref{35} we get the 
$$1+\beta\leq L^*(u_2)\leq 1+2\beta,\;\;\;1+\alpha\leq N^*(u_2)\leq 1+2\alpha$$
with 
$$L_m=1+\beta,\;\;\;L_M=1+2\beta,\;\;\;N_m=1+\alpha,\;\;\;N_M=1+2\alpha.$$
Then definition of $E$ and $\Phi$ functions becomes
\begin{equation}\label{e30}
    E(\eta,u_2(\eta))=\exp\bigg(-\dfrac{1+\alpha}{a(1+\beta)}(\eta^2-\alpha_0^2)\bigg),
\end{equation}
\begin{equation}\label{e31}
    \begin{split}
        \Phi(\eta,u_2(\eta))&=\dfrac{1}{2(1+\beta)}\exp\bigg(\frac{1+\alpha}{a(1+2\beta)}\alpha_0^2\bigg)\sqrt{\dfrac{(1+\alpha)^{\nu-1}}{a(1+2\beta)^{\nu-1}}}\\
        &\cdot\bigg[\gamma\bigg(\dfrac{1-\nu}{2},\eta^2\dfrac{1+\alpha}{a(1+2\beta)}\bigg)-\gamma\bigg(\dfrac{1-\nu}{2}, \frac{1+\alpha}{a(1+2\beta)}\alpha_0^2\bigg)\bigg].    
    \end{split}
\end{equation}
By using \eqref{e30} and \eqref{e31} the integral equation \eqref{e10} can be rewritten as the following form
\begin{equation}\label{e32}
    u_2(\eta)=\dfrac{1+\frac{\alpha_0^{\nu}p^*}{2(1+\beta)}\exp\bigg(\frac{\alpha_0^2(1+\alpha)}{a(1+2\beta)}\bigg)\sqrt{\frac{(1+\alpha)^{\nu-1}}{a(1+2\beta)^{\nu-1}}}w(\alpha_0,\eta)}{1+\frac{\alpha_0^{\nu}p^*}{2(1+\beta)}\exp\bigg(\frac{\alpha_0^2(1+\alpha)}{a(1+2\beta)}\bigg)\sqrt{\frac{(1+\alpha)^{\nu-1}}{a(1+2\beta)^{\nu-1}}}w(\alpha_0,\xi)}
\end{equation}
where
$$w(\alpha_0,\eta)=\bigg[\gamma\bigg(\dfrac{1-\nu}{2},\eta^2\dfrac{1+\alpha}{a(1+2\beta)}\bigg)-\gamma\bigg(\dfrac{1-\nu}{2}, \frac{1+\alpha}{a(1+2\beta)}\alpha_0^2\bigg)\bigg]$$
with condition
\begin{equation}\label{e33}
    \widetilde{\phi}_c(\xi)=\xi^{\nu+1}
\end{equation}
where
\begin{equation}\label{e34}
    \widetilde{\phi}_c(\xi)=\dfrac{a\alpha_0^{\nu}\exp\big(-\frac{1+\alpha}{a(1+\beta)}(\xi^2-\alpha_0^2)\big)\text{Ste}}{2\big[1+\alpha_0^{\nu}p^*\Phi(\xi,u_2(\xi))\big]}
\end{equation}
with $\Phi(\xi,u_2(\xi))$ which can be defined by \eqref{e31}.
We can easily notice that function $\widetilde{\phi}_c$ is a decreasing function for all $\eta\in(\alpha_0,\xi)$ and it enables us to get statement that equation \eqref{e33} has a unique solution.

\section*{Conclusion}
We have studied one-phase Stefan problem for generalized heat equation with heat flux entering to domain $D_2$ from metallic vapour zone through free boundary $z=\alpha(t)$ which determined from \eqref{9}. The temperature field in liquid metal zone and free boundary on melting interface are determined. Existence and uniqueness of the similarity solution imposing heat flux and convective boundary condition at the known left free boundary which describes the location of the boiling interface is proved. This article will be very useful in electrical contact engineers to describe heat process arising in the body with cross-section variable regions, in particular, the metal bridge between two electrical contact materials is melted when explosion appears and to avoid from crashing contacts it is very important to analyze the heat transfer in bridge material with different characteristics. Explicit solutions for the problem \eqref{10}-\eqref{14} with constant and linear thermal coefficients are represented, existence and uniqueness of the solution is successfully discussed. 

\section*{Acknowledgment}
The author thanked to prof. S.N. Kharin for supporting and valuable comments. The present work has been sponsored by the grant project AP14869306 "Special methods for solving electrical contact Stefan type problems and their application to the study of electric arc processes" from the Ministry of Science and Education of the Republic of Kazakhstan.


\begin{thebibliography}{99}

\bibitem{1} V. Alexiades, A.D. Solomon, \textit{Mathematical Modelling of Melting and Freezing Processes,} Hemisphere-Taylor, Francis, Washington (1993).

\bibitem{2} J.R. Cannon, \textit{The One-Dimensional Heat Equation,} Addison-Wesley, Menlo Park, California (1984).

\bibitem{3} H.S. Carslaw, C.J. Jaeger, \textit{Conduction of Heat in Solids,} Clarendon Press, Oxford (1959).

\bibitem{4} J. Crank, \textit{Free and Moving Boundary Problem,} Clarendon Press, Oxford (1984).

\bibitem{5} S.C. Gupta, \textit{The Classical Stefan Problem. Basic Concepts,} Modelling and Analysis, Elsevier, Amsterdam (2003).

\bibitem{6} V.J. Lunardini, \textit{Heat Transfer with Freezing and Thawing,} Elsevier, London (1991).

\bibitem{7} L.I. Rubinstein, \textit{The Stefan Problem,} American Mathematical Society, Providence (1971).

\bibitem{8} D.A. Tarzia, \textit{Explicit and approximated solutions for heat and mass transfer problems with a moving interface,} Chapter 20, In: Advanced Topics in Mass Transfer, M. El-Amin (Ed.), InTech Open Access Publisher, Rijeka
(2011), 439-484.

\bibitem{9} Lamé G., Clapeyron B.P.E. Memoire sur la solidification par refroidissiment d’un globe liquide Ann. Chem. Phys., 47 (1831), pp. 250-256.

\bibitem{10} D.A. Tarzia, \textit{A bibliography on moving-free boundary problems for the heat-diffusion equation,} The Stefan and related problems, MAT-Serie A,
2 (2000), 1-297.

\bibitem{11} Kharin S.N., Sarsengeldin M.M., Nouri H. Analytical solution of two-phase spherical Stefan problem by heat polynomials and integral error functions, AIP Conference Proceedings 1759, 020031(2016).

\bibitem{12} Sarsengeldin M., Kharin S.N. Method of the Integral error functions for the solution of the one- and two-phase Stefan problems and its application, Filomat 31:4 (2017), pp. 1017-1029.

\bibitem{13} Merey. M. Sarsengeldin, Stanislav N. Kharin, Samat Kassabek, Zamanbek Mukambetkazin Exact Solution of the One Phase Stefan Problem, Filomat, 32:3 (2018), pp. 985-990.

\bibitem{14} Kavokin A.A., Nauryz T.A., Bizhigitova N.T. Exact solution of two phase spherical Stefan problem with two free boundaries, AIP Conference Proceedings, 1759:1 (2016), 020117.

\bibitem{15} Kharin S.N., Nauryz T.A. \textit{Two-phase Stefan problem for generalized heat equation.} News of the National Academy of Sciences of the Republic of Kazakhstan, Physico-Mathematical Series \textbf{2(330)}, (2020), 40-49. 

\bibitem{16} Kharin S.N., Nauryz T.A. \textit{Solution of two-phase cylindrical direct Stefan problem by using special functions in electrical contact processes,} International Journal of Applied Mathematics 2(34), (2021), 237-248.

\bibitem{17} Sarsengeldin M.M., Erdogan A.S., Nauryz T.A., Nouri H. \textit{An approach for solving an inverse spherical two-phase Stefan problem arising in modeling of electric contact phenomena,} Mathematical Methods in the Applied Sciences, 41:2 (2018), 850-859.

\bibitem{18} Kharin S.N., Nauryz T.A., B. Miedzinski. \textit{Two phase spherical Stefan inverse problem solution with linear combination of radial heat polynomials and integral error functions in electrical contact process,} International Journal of Mathematics and Physics 2(11), (2020), 4-13.

\bibitem{19} Briozzo A.C., Natale M.F. \textit{Tarzia D.A. Existence of an exact solution for one-phase Stefan problem with nonlinear thermal coefficients from Tirskii's method,} Nonlinear Anal., 67:7 (2007), pp. 1989- 1998.

\bibitem{20} Briozzo A.C., Tarzia D.A. \textit{A one-phase Stefan problem for a non-classical heat equation with a heat flux condition on the fixed face,} Applied Mathematics and Computation, 182:1 (2006), pp. 809-819.

\bibitem{21} Bollati J., Briozzo A.C. \textit{Stefan problems for the diffusion–convection equation with temperature-dependent thermal coefficients,} International Journal of Nonlinear Mechanics, 134:103204 (2021).

\bibitem{22} Kumar A., Kumar Singh A., Rajeev A. \textit{Stefan problem with temperature and time dependent thermal conductivity,} Journal of King Saud University – Science, 32:1 (2020), 97-101.

\bibitem{23} Kumar Singh A., Kumar A., Rajeev A. \textit{A Stefan problem with variable thermal coefficients and moving phase change material,} Journal of King Saud University – Science, 31:4 (2019), 1064-1069.

\bibitem{24} Kharin S.N., Nauryz T.A. \textit{One-phase spherical Stefan problem with temperature dependent coefficients}, Eurasian Mathematical Journal, 1(12), (2021), 49-56.

\bibitem{25} Bollati J., Briozzo A.C., Natale M.F. \textit{Determination of unknown thermal coefficients in a non-classical Stefan problem.} Nonlinear Analysis: Real World Applications \textbf{67} (103591), 2022.

\bibitem{26} Briozzo A.C., Natale M.F., Tarzia D.A., Determination of unknown thermal coefficients for Storm’s-type materials through a phase-change process. International Journal of Nonlinear Mechanics, 34:2 (1999), pp. 329-340.

\bibitem{27} Huntul M.J., Lesnic D., An inverse problem of finding the time-dependent thermal conductivity from boundary data, Int. Commun. Heat Mass Transfer 85 (2017), pp. 147-154.

\end{thebibliography}
\end{document}